\documentclass[reqno,dvipsnames]{amsart}

\usepackage[a4paper,hmarginratio=1:1]{geometry}

\usepackage{comment} 
\usepackage{pdflscape}
\usepackage{hyperref}
\hypersetup{
	colorlinks=true,
	citecolor=blue,
	linkcolor=blue,
	filecolor=magenta,      
	urlcolor=cyan,
}

\usepackage{caption}
\usepackage{fullpage}

\usepackage{pgfplots}
\usepgfplotslibrary{fillbetween}

\usepackage[nameinlink,capitalise,noabbrev]{cleveref}
\usepackage{amsmath, amsthm, amssymb, graphicx, dsfont, stmaryrd, extarrows, mathtools}
\usepackage[T1]{fontenc}
\usepackage{textcomp}
\usepackage{lmodern}
\usepackage{microtype}

\usepackage[color,matrix,arrow,all]{xy}
\providecommand{\1}{}
\renewcommand{\1}{\mathbbm{1}}
\newcommand{\unit}{\mathbbm{1}}

\definecolor{internationalkleinblue}{rgb}{0.0, 0.18, 0.65}

\usepackage{enumitem}
\usepackage[hang,flushmargin]{footmisc}
\usepackage{amscd}
\usepackage{mathrsfs}
\usepackage{tikz-cd}
\usetikzlibrary{matrix,patterns}
\usepackage{color}
\usepackage{bbm}
\usepackage[mathscr]{euscript}

\numberwithin{equation}{section}
\theoremstyle{plain}
\newtheorem{thm}[equation]{Theorem}

\newtheorem*{thm*}{Theorem}
\newtheorem*{prop*}{Proposition}
\newtheorem*{mthm*}{Meta Theorem}
\newtheorem*{cor*}{Corollary}

\newtheorem{prop}[equation]{Proposition}

\newtheorem{strat}[equation]{Strategy}
    
\newtheorem{cor}[equation]{Corollary}       
\newtheorem{lem}[equation]{Lemma}

\theoremstyle{definition} 
\newtheorem{defn}[equation]{Definition} 
\newtheorem{hyp}[equation]{Hypothesis}
\newtheorem{ex}[equation]{Example}

\newtheorem{rem}[equation]{Remark}

\newcommand{\C}{\mathscr{C}}
\newcommand{\D}{\mathscr{D}}

\newcommand{\E}{\mathscr{E}}
\newcommand{\G}{\mathcal{G}}

\newcommand{\K}{\mathcal{K}}

\newcommand{\Sp}{\mathrm{Sp}}

\newcommand{\msf}[1]{\mathsf{#1}}

\renewcommand{\phi}{\varphi}

\newcommand{\Q}         {{\mathbb{Q}}}
\newcommand{\Z}         {{\mathbb{Z}}}

\newcommand{\Hom}       {\operatorname{Hom}}

\newcommand{\cK}{\mathcal{K}}

\newcommand{\sfD}{\mathsf{D}}



\renewcommand{\bmod}[1]{\mathrm{Mod}(#1)}
\newcommand{\mrm}[1]{\mathrm{#1}}

\newcommand{\cpctRecollement}[3]{
\xymatrix@C=2em{{#1} \ar[r]|-{#2} & {#3}}
}

\setlength{\parindent}{0cm}
\setlength{\parskip}{0.8ex}
\title{Tensor-triangular rigidity in chromatic homotopy theory}
\author{Scott Balchin}
\address[Balchin]{Mathematical Sciences Research Centre, Queen's University Belfast, UK}
  \email{s.balchin@qub.ac.uk}

\author{Constanze Roitzheim}
\address[Roitzheim]{University of Kent \\ School of Mathematics, Statistics and Actuarial Science\\ Cornwallis Building, Canterbury, CT2 7NF}
\email{c.roitzheim@kent.ac.uk}

\author{Jordan Williamson}
\address[Williamson]{Department of Algebra, Faculty of Mathematics and Physics, Charles University in Prague, Sokolovsk\'{a} 83, 186 75 Praha, Czech Republic}
\email{williamson@karlin.mff.cuni.cz}

\setlength{\marginparwidth}{2.6cm}
\usepackage[colorinlistoftodos]{todonotes}

\pgfplotsset{%
    /pgfplots/area legend/.style={
        /pgfplots/legend image code/.code={
            \fill[##1] (0cm,0.6em) rectangle (2*\pgfplotbarwidth,-0.3em);
}, },
}
\pgfplotsset{compat=1.17}

\DeclareMathOperator{\whodot}{\widehat{\odot}}
\DeclareMathOperator{\whotimes}{\widehat{\otimes}}

\begin{document}

\maketitle

\begin{abstract}
	We study the uniqueness of enhancements of tensor-triangulated categories. To do so, we provide conditions under which these enhancements interact well with categorical decompositions. As an application we obtain new results about the uniqueness of enhancements in chromatic homotopy theory.
\end{abstract}

\let\thefootnote\relax\footnotetext{MSC 2020 codes: 55P42, 55P60, 18G80}
    
\section{Introduction}

Triangulated categories, since their inception by Verdier~\cite{verdier} and Dold--Puppe~\cite{DoldPuppe}, have formed a central component of pure mathematics with their influence being seen throughout algebra, representation theory, and homotopy theory. Unfortunately, triangulated categories have well-known shortcomings, such as poor functoriality, lack of (co)completeness, and undesirable behaviour with respect to ring objects. These limitations can be resolved by working with an enhancement of the triangulated category, that is, by having a stable $\infty$-category whose homotopy category is the triangulated category in question. However, this begs a natural question: is this passage to an enhancement unique? This is precisely the question that rigidity addresses.  If there is a unique enhancement, we say that the enhancement is \emph{rigid}. Conversely, an enhancement which is not unique is called \emph{exotic}.


 Many of the natural examples of triangulated categories come with further structure, namely they have a compatible symmetric monoidal structure. It is reasonable to investigate how this additional structure interacts with the question of rigidity. That is, we now require our enhancement to be stable and monoidal. The question of \emph{tensor-triangular rigidity} considers the uniqueness of such enhancements. The uniqueness of the enhancement can be interpreted in a hierarchy of strengths depending on how much structure is preserved. We will make this hierarchy explicit in \cref{defn:difrigid}.

 
 %
 
 

To study tensor-triangular rigidity we take a leaf from the book of chromatic homotopy theory. It is known via the chromatic convergence theorem that the  homotopy type of any finite spectrum may be recovered from its $E(n)$-localizations, and furthermore, that the $E(n)$-localization may be built from $E(n{-}1)$-localization and $K(n)$-localization. These fracturing techniques are in fact special cases of a more general theory appearing in tensor-triangular geometry~\cite{balmer,BalmerFavi}, and have been extensively used in, among other things, the modular representation theory of finite groups~\cite{BIK}. These objectwise decompositions can furthermore be extended to categorical decompositions, allowing the deconstruction of tensor-triangulated categories into smaller pieces which are more tractable. This approach has been fruitful in chromatic homotopy theory \cite{BAC}, and in equivariant homotopy theory \cite{adelicm, GStorus}.

If the smaller pieces arising in the aforementioned decompositions happen to be tensor-triangular rigid in a compatible way, one may hope to deduce tensor-triangular rigidity for the whole category by reversing the decomposition process. We make this strategy explicit in our first main theorem, \cref{metatheorem}. This theorem tells us that it is possible to reverse the process provided that a certain compatibility condition is satisfied.


Our main application of \cref{metatheorem} is in the setting of chromatic homotopy theory. To this end, let us first discuss the state of the art in rigidity results without taking into account the tensor structure. One landmark result is by Schwede who proved that the category of spectra, denoted here as $\Sp$, is rigid~\cite{Schwede2,Schwederigidity}. For localized categories of spectra, it has been proved that the categories $L_1\Sp_{(2)}$ of $E(1)$-local spectra and $L_{K(1)} \Sp_{(2)}$ of $K(1)$-local spectra, both at the prime $2$, are rigid~\cite{Roitzheimrigidity,Ishakrigidity}. 
Conversely, we have a wealth of examples which are not rigid. For example the category $L_n\Sp_{(p)}$ of $E(n)$-local spectra at the prime $p$ is exotic whenever $2p-2>n^2+n$~\cite{Franke, PatchkoriaPstragowski}.  This leaves a substantial  range in which the question of rigidity is unknown.



It is clear by definition that it is harder to be an exotic model in the tensor-triangular sense as we ask for more structure to be preserved. This phenomenon can be seen in the $E(n)$-local category of spectra, where work of Barkan~\cite{Barkan} provides tensor-triangular exotic models in the range  $2p-2 > n^2 + 3n$. On the other hand, this means that the extra structure afforded by the tensor provides a scaffolding to prove positive results regarding rigidity as we undertake in this paper. All in all, the situation of rigidity for $L_{n} \Sp_{(p)}$ can thus be summarised as in~\cref{fig:graph}.

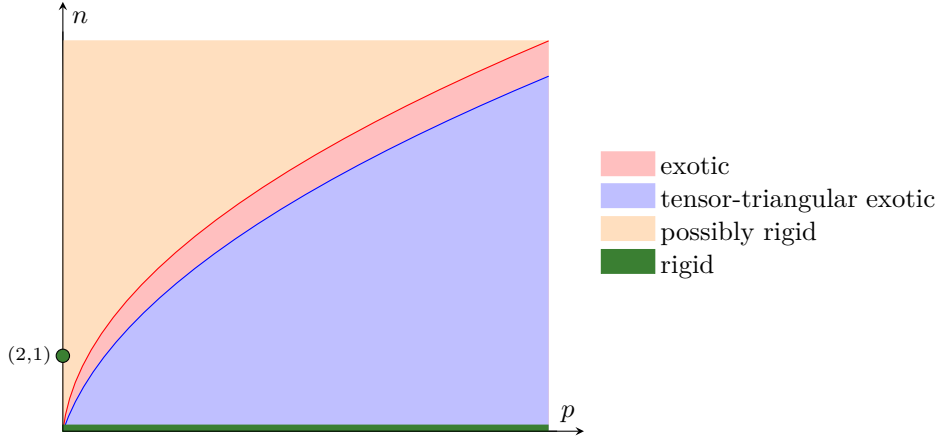
\begin{figure}[h]
\centering
\begin{tikzpicture}\pgfplotsset{ticks=none}
    \begin{axis}[legend pos=outer north east, legend style={draw=none}, area legend,
  xlabel = $p$,
  ylabel = $n$,
  axis lines = middle,
  ymax = 11,
  xmax = 60,
  xmin = 1,
  legend style={at={(1.7,0.5)},anchor=east},
  legend cell align={left}]
    
    \addplot[domain=0:10, color = red!25] (0.5*x^2 + 0.5*x + 1,x);
    \addlegendentry{exotic}
    \addplot[domain=0:9.095, color = blue!25] (0.5*x^2 + 1.5*x + 1,x);
    \addlegendentry{tensor-triangular exotic}
    \addplot[name path=xtop, domain=1:56, color = orange!25] {10};
    \addlegendentry{possibly rigid}
    \addplot[name path=placeholder, domain=-0:56, color = OliveGreen] {0};
    \addlegendentry{rigid}
    
    \addplot[domain=-0:9.095, color = blue, name path=A] (0.5*x^2 + 1.5*x + 1, x); 
    \addplot[domain=-0:10, color = red, name path=B] (0.5*x^2 + 0.5*x + 1, x);
    \addplot[name path=placeholder, domain=0:56, color = OliveGreen, line width = 5] {0};
    \addplot[draw=none,name path=x, domain=0.1:56] {0};

    \addplot+[orange!25] fill between[of=x and xtop, soft clip={domain=-0:100}];
    \addplot+[red!25] fill between[of=x and B, soft clip={domain=-0:100}];
    \addplot+[blue!25] fill between[of=x and A, soft clip={domain=-0:100}];
    \end{axis}
    \draw (0,0) -- (0,5.3);
    \draw (0,0) -- (6.5,0);
    \filldraw[fill = OliveGreen, stroke = black] (0,1) circle (2.5pt);
    \node[label={180:{${\scriptstyle(2,1)}$}}] at (0.1,1) {};
\end{tikzpicture}
\caption{A graph showing the various ranges of rigidity for $L_n \Sp_{(p)}$. We highlight that the $n=0$ line and the single point $p=2$, $n=1$ are the only known rigid examples.}\label{fig:graph}
\end{figure}

Our main contribution to this story is the following theorem, which links the tensor-triangular rigidity of the $E(n)$-local category to the tensor-triangular rigidity of the $K(i)$-local categories for $1 \leqslant i \leqslant n$ and follows from an application of  \cref{metatheorem}. In particular, the main technical contribution of \cref{sec:spectraex} is a verification of the aforementioned compatibility conditions required to assemble tensor-triangular rigidity from the local pieces, which in this case are the $K(i)$-local categories $L_{K(i)}\Sp_{(p)}$.

\begin{thm*}[\cref{main}, \cref{K(n)}]
Let $n \geqslant 1$. If $L_{K(i)}\Sp_{(p)}$ is unitally tensor-triangular rigid for all $1 \leqslant i \leqslant n$, then $L_n\Sp_{(p)}$ is strongly tensor-triangular rigid.
\end{thm*}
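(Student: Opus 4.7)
The plan is to induct on $n$, using the chromatic fracture square as the categorical decomposition into which the Meta Theorem is fed. Recall that for any $X \in L_n\Sp_{(p)}$ one has a pullback square
\[
L_n X \simeq L_{n-1} X \times_{L_{n-1} L_{K(n)} X} L_{K(n)} X,
\]
which assembles to an equivalence of tensor-triangulated categories presenting $L_n \Sp_{(p)}$ as a recollement/pullback of $L_{n-1}\Sp_{(p)}$ and $L_{K(n)}\Sp_{(p)}$ glued along the functor $L_{n-1} \circ \iota_*$, where $\iota_* : L_{K(n)}\Sp_{(p)} \to L_n\Sp_{(p)}$ is the inclusion. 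This is the decomposition the Meta Theorem is designed to invert.

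For the base case $n=1$, apply the Meta Theorem to the fracture decomposition of $L_1\Sp_{(p)}$ into the rational part $\Sp_{(p),\mathbb{Q}} \simeq \Sp_\mathbb{Q}$ (strongly tt-rigid, as it is equivalent to $\mathrm{Mod}_\mathbb{Q}$ in a monoidal way) and the $K(1)$-local part $L_{K(1)}\Sp_{(p)}$ (unitally tt-rigid by hypothesis). The inductive step assumes $L_{n-1}\Sp_{(p)}$ is strongly tt-rigid, invokes the hypothesis that $L_{K(n)}\Sp_{(p)}$ is unitally tt-rigid, and applies the Meta Theorem to the chromatic fracture decomposition to upgrade this to strong tt-rigidity of $L_n\Sp_{(p)}$. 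The upgrade from ``unital'' on the $K(n)$-local piece to ``strong'' on the whole is exactly the kind of gain the Meta Theorem is meant to produce, since the gluing data is tightly controlled by the rigid pieces.

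The central technical work — and the obstacle the paper flags as Section~\ref{sec:spectraex} — is the verification that the chromatic fracture decomposition satisfies the compatibility hypothesis of \cref{metatheorem}. Concretely, one must show that any monoidal enhancement of $L_n\Sp_{(p)}$ restricts to monoidal enhancements of the two pieces \emph{together with} a compatible enhancement of the gluing functor $L_{n-1} \iota_*$, so that an equivalence of enhancements on the pieces which respects the gluing data descends to an equivalence on the pullback. This requires identifying the smashing/finite localization and colocalization functors at the $\infty$-categorical level, checking that the Bousfield localizations $L_{n-1}$ and $L_{K(n)}$ are detected by (and invariant under) enhancement, and — most delicately — verifying that the lax monoidal structure on the gluing functor transports correctly across a purported exotic enhancement. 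In practice I would argue this by exhibiting $L_{n-1}$ as smashing with the telescope/Moore spectrum model for the finite localization and $L_{K(n)}$ as a finite Bousfield localization at a compact object of $L_n\Sp_{(p)}$, so that both functors and their monoidal structures are determined by the tt-structure on the homotopy category, and hence preserved by any tt-equivalence of enhancements.

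A final bookkeeping point is the correct handling of unitality: the Meta Theorem presumably requires the input rigidity on the pieces to be unital (so that the unit of a purported exotic enhancement can be matched), and outputs strong rigidity once the gluing compatibility is established. Since the hypothesis already gives unital tt-rigidity on each $L_{K(i)}\Sp_{(p)}$, and the inductive hypothesis supplies strong (hence unital) tt-rigidity on $L_{n-1}\Sp_{(p)}$, the induction closes cleanly once the compatibility check above is in place.
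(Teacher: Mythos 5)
Your outline correctly identifies the right tools: the recollement $(L_{K(n)}\Sp, L_{n-1}\Sp)$ of $L_n\Sp$ coming from the local duality context $(L_n\Sp, F(n))$ with $F(n)$ a finite type~$n$ complex, the Meta Theorem (\cref{metatheorem}) as the assembly mechanism, and induction on $n$ with base case $L_0\Sp\simeq\mathrm{Mod}(H\Q)$ strongly tt-rigid. You also correctly flag the compatibility verification as the crux. But the argument you sketch for that verification does not close the gap, and it is precisely the gap that constitutes the bulk of \cref{sec:spectraex}.

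The issue is that "the (co)localization functors are determined by the tt-structure and hence preserved by any tt-equivalence" only delivers \cref{restricts}/\cref{ttrestricts}: the \emph{given} tt-equivalence $\Phi\colon hL_n\Sp\to h\D$ restricts to tt-equivalences on the local and complete parts at the level of homotopy categories. It does \emph{not} tell you that the $\infty$-categorical equivalences $F_L\colon L_{n-1}\Sp\xrightarrow{\simeq} L\D$ and $F_\Lambda\colon L_{K(n)}\Sp\xrightarrow{\simeq}\Lambda\D$ supplied by the rigidity hypotheses commute with the gluing functor; rigidity merely asserts \emph{existence} of such equivalences, with no a priori relationship to $\Phi$ or to each other. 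What you actually need is the commutativity of the square
\[
\xymatrix{
L_{K(n)}\Sp \ar[r]^{L_{n-1}} \ar[d]_{F_\Lambda} & L_{n-1}\Sp \ar[d]^{F_L} \\
\Lambda\D \ar[r]_{L} & L\D
}
\]
as a square of $\infty$-categories, i.e., the strictness of the induced lax morphism of recollements. The paper's proof of this is nontrivial: it uses the universal property of spectra to pin down $F_L\simeq L\1_\D\odot-$ and $F_\Lambda\simeq\Lambda\1_\D\whodot-$, builds a natural transformation comparing the two composites (\cref{lax}), shows $F(L_nS^0)\simeq\1_\D$ so that $F\simeq\1_\D\odot-$ is monoidal (\cref{lem:Funit}, \cref{lem:itsmonoidal}), proves $\1_\D\odot M_nX$ is $\Gamma$-torsion (\cref{colocals}), matches thick subcategories via the $K(n)$-local thick subcategory theorem (\cref{equalthicks}), and deduces $\1_\D\odot L_{n-1}(-)\simeq L(\1_\D\odot-)$ (\cref{lem:iffnthengood}), before finally verifying strictness. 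None of this follows from the observation that $L_{n-1}$ and $L_{K(n)}$ are finite localization/colocalization. Your appeal to "the lax monoidal structure on the gluing functor transports correctly" is the statement to be proved, not an argument for it. Until that verification is supplied, the proposal is a strategy rather than a proof.
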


Along the way, we also prove general results relating the different  notions of tensor-triangular rigidity for localized categories of spectra, see \cref{subsec:specialfunctors}.

\subsection*{Conventions}
We will freely use the language of stable $\infty$-categories from~\cite{HTT,HA}. For a stable $\infty$-category $\C$, we will write $\Hom_\C^\Sp(-,-) \in \Sp$ for the mapping spectrum, and when $\C$ is moreover closed symmetric monoidal, we write $\Hom_\C^\C(-,-) \in \C$ for the internal hom object. That is, the subscript denotes the domain of the hom functor, and the superscript denotes the codomain.

By a \emph{localization} $L$ of a presentable stable symmetric monoidal $\infty$-category $\C$ we mean a functor $L \colon \C \to \D$ with a fully faithful right adjoint $\iota$ such that if $LX \simeq 0$ then $L(X \otimes Y)\simeq 0$ for all $X,Y \in \C$ (see \cite[Definition 3.1.1]{axiomatic}). We often abuse notation and write $L$ for the composite $\iota L$. For a localization $L$ we identify $\D$ with $L\C$, the full subcategory of $\C$ spanned by the $L$-local objects, that is, those objects $X$ for which the unit map $X \to LX$ is an equivalence. In \cite{HA} such localizations are called monoidal localizations, but we will follow the conventions of Hovey--Palmieri--Strickland~\cite{axiomatic}.

\subsection*{Acknowledgements}
SB would like to thank the Max Planck Institute for Mathematics for its hospitality, and was partially supported by the European Research Council (ERC) under Horizon Europe (grant No.~101042990).

CR thanks the LMS for an Emmy Noether Fellowship which supported a research visit for this project. 

JW was supported by the grant GA\v{C}R 20-02760Y from the Czech Science Foundation, and by the project
PRIMUS/23/SCI/006 from Charles University. 

SB and JW would also like to thank the Hausdorff Research Institute for Mathematics for its hospitality and support during the trimester program `Spectral Methods in Algebra, Geometry, and Topology', funded by the Deutsche Forschungsgemeinschaft (DFG, German Research Foundation) under Germany's Excellence Strategy – EXC-2047/1 – 390685813.

\newpage

\section{Types of rigidity}\label{sec:levels}

In this section we will introduce the concept of \emph{tensor-triangular rigidity} and illustrate it with some familiar examples. Let us begin by recalling the usual definition of rigidity.

\begin{defn}
A  presentable stable $\infty$-category $\C$ is \emph{rigid} if for any  presentable stable $\infty$-category $\D$ which is equipped with a triangulated equivalence $\Phi \colon h\C \xrightarrow{\simeq_{\Delta}} h\D$, there is an equivalence $F \colon \C \xrightarrow{\simeq} \D$ on the level of $\infty$-categories. 
\end{defn}

The philosophy is that if $\C$ is rigid, then its structure as a stable $\infty$-category is uniquely determined by the triangulated structure of its homotopy category. 
If $\C$ is not rigid, then its homotopy category $h\C$ has an \emph{exotic model}, that is, a  presentable stable $\infty$-category $\D$ for which $h\C$ is triangulated equivalent to $h\D$, but $\C$ is not equivalent to $\D$. 
A classical example of an exotic model is given by Schlichting~\cite{Schlichting} (with further work by Dugger--Shipley~\cite{DuggerShipley}) as we now recall. 
Let $k = \Z/p$, and consider the Frobenius rings $R = \Z /p^2$ and $R_{\epsilon} = k [\epsilon] / (\epsilon^2)$. Then the stable module categories of these rings have triangulated equivalent homotopy categories, but there is no equivalence of the level of the $\infty$-categories. 
Another example comes from the category of modules over the Morava $K$-theory spectrum $K(n)$, for whom $\mathrm{Mod}(K(n)_*)$ is an exotic model, see~\cite[Remark 2.5]{SchwedeShipleyuniqueness} or~\cite[\S 5.2]{algebraicity} for details.

\begin{rem}
The equivalence $F \colon \C \xrightarrow{\simeq} \D$ obtained via rigidity need not have any connection to the given equivalence $\Phi$. In particular, it need not be the case that $\Phi$ is the derived functor of $F$.
\end{rem}

The goal of this paper is to introduce and explore other forms of rigidity that take into account extra structure on the categories $\C$ and $\D$. The structure that we will be interested in is when $\C$ and $\D$ are moreover symmetric monoidal $\infty$-categories.

\begin{defn}\label{defn:difrigid}
Let $(\C, \otimes, \1_{\C})$ be a presentable stable closed symmetric monoidal $\infty$-category. Then we say that $\C$ is
\begin{itemize}
	\item \emph{tensor-triangular rigid} (henceforth \emph{tt-rigid}) if whenever there is a tensor-triangulated equivalence $\Phi \colon h \C \xrightarrow{\simeq_{\Delta, \otimes}} h \D$ for $(\D, \otimes, \1_{\D})$ a presentable stable closed symmetric monoidal $\infty$-category, there is an equivalence of $\infty$-categories $F \colon \C \xrightarrow{\simeq} \D$,
	\item \emph{unitally tt-rigid} if it is tt-rigid and the  equivalence $F$ satisfies $F(\1_{\C}) \simeq \1_{\D}$,
	\item \emph{strongly tt-rigid} if it is tt-rigid and the equivalence $F$ is symmetric monoidal.
\end{itemize}
\end{defn}

Immediately from the definitions, we see that the following implications hold.
\[
\xymatrix{
\text{strongly tt-rigid} \ar@{=>}[r] & \text{unitally tt-rigid} \ar@{=>}[r] & \text{tt-rigid} & \ar@{=>}[l] \text{rigid}
}
\]
Let us record our main guiding examples.

\begin{ex}\label{ex:unitally}
The categories $\Sp$, $L_1\Sp_{(2)}$ and $L_{K(1)}\Sp_{(2)}$ are rigid as proved by Schwede~\cite{Schwederigidity}, Roitzheim~\cite{Roitzheimrigidity}, and Ishak~\cite{Ishakrigidity} respectively. As such, they are tt-rigid. For localizations of spectra, we will see that some of the above implications have converses (c.f., \cref{subsec:specialfunctors}). As such, in this instance we can say more: they are all \emph{strongly} tt-rigid, see \cref{ex:spectra}.
\end{ex}

\begin{ex}\label{ex:specta}
Franke conjectured the existence of exotic models for the categories $L_n\Sp_{(p)}$ in certain ranges~\cite{Franke}. A succession of work culminating in Patchkoria--Pstr\k{a}gowski~\cite{PatchkoriaPstragowski} confirms this conjecture in the range $2p-2 > n^2 + n$. As such, in this range the category $L_n\Sp_{(p)}$ is not rigid. Moreover, the $E(n)$-local categories also provide examples of tt-exotic models. Barkan has proved that when $2p-2>n^2+3n$, the category $L_n\Sp_{(p)}$ is not tt-rigid~\cite{Barkan}.
\end{ex}

Our next family of examples may be viewed as an enhancement of~\cite[Theorem 6.9]{BarnesRoitzheim} to the tensor-triangular setting. We recall that a graded-commutative ring $A$ is \emph{intrinsically formal as a commutative DGA} if for any commutative DGA $B$ with $A \simeq H_\ast(B)$ as graded-commutative rings, we have that $B$ is quasi-isomorphic to $A$ via a zig-zag of maps of commutative DGAs.

\begin{thm}\label{formalhomotopy}
Let $R$ be a commutative $H\mathbb{Q}$-algebra such that $\pi_*R$ is intrinsically formal as a commutative DGA. Then $\bmod{R}$ is strongly tt-rigid.
\end{thm}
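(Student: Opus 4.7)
The plan is to verify strong tt-rigidity directly: given any presentable stable closed symmetric monoidal $\infty$-category $\D$ and a tensor-triangulated equivalence $\Phi\colon h\bmod{R} \xrightarrow{\simeq_{\Delta,\otimes}} h\D$, I will produce a symmetric monoidal equivalence $\bmod{R} \simeq \D$. The strategy has three stages: identify $\D$ symmetric-monoidally with $\bmod{\1_\D}$, lift $\1_\D$ to a commutative $H\mathbb{Q}$-algebra corresponding to a commutative DGA, and apply intrinsic formality to identify $\1_\D$ with $R$.

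For the first stage, the tensor unit is characterized by the property $\1 \otimes X \simeq X$, so any tensor-triangulated equivalence preserves it; thus $\Phi(R) \simeq \1_\D$, and $\Phi$ induces an isomorphism $\pi_*R \cong \pi_*\1_\D$ of graded-commutative rings. Since $R$ is a compact generator of $\bmod{R}$ and triangulated equivalences preserve compact generation, $\1_\D$ is a compact generator of $h\D$, hence of $\D$. The $\infty$-categorical Schwede--Shipley theorem then supplies a symmetric monoidal equivalence $\D \simeq \bmod{\1_\D}$.

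For the second stage, since $R$ is an $H\mathbb{Q}$-algebra, $\pi_*\1_\D \cong \pi_*R$ is a $\mathbb{Q}$-algebra, so $\1_\D$ is rational and lifts uniquely to a commutative $H\mathbb{Q}$-algebra. Under the equivalence between commutative $H\mathbb{Q}$-algebras and commutative DGAs over $\mathbb{Q}$, both $R$ and $\1_\D$ correspond to commutative DGAs $A_R$ and $A_{\1_\D}$ whose homologies are identified with $\pi_*R$ as graded-commutative rings. The intrinsic formality hypothesis on $\pi_*R$ provides zig-zags of commutative DGA quasi-isomorphisms $A_R \simeq \pi_*R \simeq A_{\1_\D}$, yielding $R \simeq \1_\D$ as commutative $H\mathbb{Q}$-algebras, and consequently a symmetric monoidal equivalence $\bmod{R} \simeq \bmod{\1_\D} \simeq \D$.

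The main subtlety is that the input from $\Phi$ is purely homotopy-categorical, yet the output must be an $\infty$-categorical equivalence preserving the full symmetric monoidal structure. This is navigated by using $\Phi$ only to extract two pieces of information — that $\1_\D$ is a compact generator of $\D$, and that $\pi_*\1_\D \cong \pi_*R$ as graded-commutative rings — after which the Schwede--Shipley theorem reconstructs $\D$ intrinsically as $\bmod{\1_\D}$, and intrinsic formality as a \emph{commutative} DGA (rather than merely associative) bridges the remaining gap to identify $\1_\D$ with $R$ as commutative $H\mathbb{Q}$-algebras. This last point is exactly why intrinsic formality in the commutative sense is the correct hypothesis for the strong version of tt-rigidity, rather than the weaker associative version that would suffice for ordinary rigidity.
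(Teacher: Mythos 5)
Your proof is correct and follows essentially the same route as the paper's: both extract from $\Phi$ that $\D$ is compactly generated by its monoidal unit, invoke the symmetric monoidal recognition theorem of Lurie/Schwede--Shipley to write $\D \simeq \bmod{\1_\D}$, use rationality to pass to commutative DGAs via Shipley's algebraicization, and then apply intrinsic formality. The only cosmetic difference is that you identify $R \simeq \1_\D$ directly as commutative $H\mathbb{Q}$-algebras before passing to module categories, whereas the paper compares the module categories $\bmod{\Theta(\E)}$ and $\bmod{\Theta(R)}$ without asserting an equivalence of the underlying ring spectra.
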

\begin{proof}
Suppose that $\Phi\colon h\bmod{R} \to h\D$ is a tensor-triangulated equivalence where $\D$ is a presentable stable closed symmetric monoidal $\infty$-category. Therefore, $h\D$ is compactly generated by its monoidal unit so by~\cite[Proposition 7.1.2.7]{HA}, there exists a commutative ring spectrum $\E$ such that $\D$ is symmetric monoidally equivalent to $\bmod{\E}$. Since $\pi_*(\E) = \pi_*(R)$ which is rational, it follows that $\E$ is a commutative $H\mathbb{Q}$-algebra (as $H\mathbb{Q}$ is the rational sphere spectrum). By Shipley's algebraicization theorem~\cite[Theorem 1.2]{ShipleyHZ} (see also~\cite[Theorem 7.2]{Williamsonflat} and~\cite[\S 7.1.2]{HA}), there are commutative DGAs $\Theta(\E)$ and $\Theta(R)$ such that $H_*(\Theta(\E)) = \pi_*(R) = H_*(\Theta(R))$, together with symmetric monoidal equivalences \[\bmod{\E} \simeq \bmod{\Theta(\E)} \quad \text{and} \quad \bmod{R} \simeq \bmod{\Theta(R)}.\] Since $\pi_*(R)$ is intrinsically formal as a commutative DGA, there is a quasi-isomorphism of commutative DGAs $\Theta(\E) \simeq \Theta(R)$. Therefore there are symmetric monoidal equivalences $\bmod{\Theta(\E)} \simeq \bmod{\Theta(R)}$ by extension and restriction of scalars. Combining all these, we have a symmetric monoidal equivalence $\D \simeq \bmod{R}$, and hence $\bmod{R}$ is strongly tt-rigid.
\end{proof} 

\begin{rem}
We note that the rational assumption on $R$ in the previous result is needed to ensure that the corresponding DGA $\Theta R$ is commutative. A commutative $HA$-algebra (for a commutative ring $A$) corresponds to an $E_\infty$-$A$-DGA~\cite{RichterShipley}, and in general this cannot be rectified to a strictly commutative DGA.
\end{rem}

\begin{rem}
All of the examples of tt-rigid categories that we encounter in this paper are in fact strongly tt-rigid. We are not aware of an example which is tt-rigid but not unitally so.
\end{rem}


\section{Rigidity via recollements}

Now that we have introduced the theory of tt-rigidity, we shall discuss a methodology of proving tt-rigidity for a given $\C$. The idea is as follows: we suppose that we can decompose $\C$ into categories $\C_i$ which can be reassembled to retrieve $\C$. If each $\C_i$ is tt-rigid, then one would like to deduce that $\C$ itself is tt-rigid by showing that the reassembly process is compatible with the tt-rigidity. We will focus our attention to one form of reconstruction here, coming from the theory of \emph{recollements}. 

We begin by discussing the general abstract theory in \cref{sec:abst} before introducing the concept of local duality contexts which will be our main source of recollements in \cref{sec:localdual}. Finally in \cref{sec:meta} we provide the first main theorem of this paper that gives conditions on when we can deduce tt-rigidity from a local duality setup. We will apply this  theorem to a specific example of interest in chromatic homotopy theory in \cref{sec:spectraex}.

\subsection{The general theory}\label{sec:abst}
We recall the pertinent features of recollements here and refer the readers to~\cite[\S A.8]{HA} and~\cite{Shah} for further details.
\begin{defn}
Let $\C$ be a presentable $\infty$-category that admits finite limits and let $\C_0, \C_1 \subset \C$ be full subcategories that are stable under equivalences. Then we say that the pair $(\C_0, \C_1)$ is a \emph{recollement} of $\C$ if the inclusion functors $j_\ast \colon \C_0 \hookrightarrow \C$ and $i_\ast \colon \C_1 \hookrightarrow \C$ admit left exact left adjoints $j^\ast$ and $i^\ast$ such that:
\begin{enumerate}
	\item $j^\ast i_\ast$ is equivalent to the constant functor at the terminal object of $\C_0$;
	\item $j^\ast$ and $i^\ast$ are jointly conservative.
\end{enumerate} 
We shall call $i^\ast j_\ast\colon \C_0 \to \C_1$ the \emph{gluing functor}, the category $\C_0$ the \emph{complete part of the recollement} and $\C_1$ the \emph{local part of the recollement}.
\end{defn}

The next result tells us that, given a recollement, we can reconstruct any object $X \in \C$ via a homotopy pullback of objects in $\C_0$ and $\C_1$.

\begin{prop}[{\cite[Proposition 2.2]{Shah}}]\label{Hasse}
Let $(\C_0,\C_1)$  be a recollement of $\C$. Then there is a pullback square of functors
\[\xymatrix@!0@R=4pc@C=4pc{
\mathrm{id} \ar[r] \ar[d] & i_\ast i^\ast \ar[d] \\
j_\ast j^\ast \ar[r] & i_\ast i^\ast j_\ast j^\ast \rlap{.}
}\]
\end{prop}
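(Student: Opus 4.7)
The plan is to assemble the square from the units of the two adjunctions and then verify the pullback condition using the joint conservativity hypothesis. First, I would identify the square as built from the units $\eta^i\colon \mathrm{id}\Rightarrow i_\ast i^\ast$ and $\eta^j\colon \mathrm{id}\Rightarrow j_\ast j^\ast$: the top edge is $\eta^i$, the left edge is $\eta^j$, the right edge is the whiskering $i_\ast i^\ast\eta^j$, and the bottom edge is $\eta^i j_\ast j^\ast$. Commutativity is automatic from naturality of $\eta^i$ evaluated at the components of $\eta^j$.

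To show the square is a pullback (pointwise at each $X\in\C$), I would exploit that the recollement axioms include the joint conservativity of $i^\ast$ and $j^\ast$ together with their left exactness. Since left exact functors preserve pullbacks and joint conservativity detects equivalences, these two functors jointly detect pullbacks. It therefore suffices to verify that the square becomes a pullback after applying each of $i^\ast$ and $j^\ast$ separately.

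After applying $i^\ast$: full faithfulness of $i_\ast$ makes the counit $i^\ast i_\ast\Rightarrow \mathrm{id}$ an equivalence, and a triangle identity then forces both horizontal arrows of the resulting square to be equivalences (the top becoming $i^\ast\Rightarrow i^\ast i_\ast i^\ast$, the bottom its whiskering with $j_\ast j^\ast$). Such a square is automatically a pullback. After applying $j^\ast$: the defining axiom $j^\ast i_\ast\simeq \ast$ collapses the right-hand column to the terminal object of $\C_0$, while the left vertical arrow becomes the unit of $j^\ast\dashv j_\ast$ whiskered with $j^\ast$, which is an equivalence by full faithfulness of $j_\ast$. A square with one vertical an equivalence and the opposite vertical between terminal objects is trivially a pullback.

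The main technical point is the upgrade from joint detection of equivalences to joint detection of pullbacks, but this follows routinely from left exactness: apply $i^\ast$ and $j^\ast$ to the canonical comparison map from $\mathrm{id}(X)$ to the pullback of the other three corners, observe that both images are equivalences by the two cases above, and conclude by joint conservativity. I do not anticipate any genuine obstacle beyond this bookkeeping.
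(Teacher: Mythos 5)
Your proof is correct and is essentially the standard argument (the paper itself does not reprove this; it cites Shah, whose argument proceeds in the same way via joint conservativity and left exactness of $i^\ast$ and $j^\ast$). All the steps check out: the square is assembled from the units $\eta^i$ and $\eta^j$ with commutativity from naturality of $\eta^i$; left exactness plus joint conservativity upgrade to joint detection of pullbacks via the comparison map to the fiber product; after $i^\ast$ both horizontals become equivalences by the triangle identity combined with full faithfulness of $i_\ast$; after $j^\ast$ the right column collapses to the terminal object of $\C_0$ by the recollement axiom $j^\ast i_\ast\simeq\ast$, and the left vertical is an equivalence by the triangle identity for $j^\ast\dashv j_\ast$. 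No gaps.
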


Although \cref{Hasse} gives us an objectwise reconstruction, we can in fact elevate this to a categorical reconstruction. This reconstruction will be key to our tt-rigidity machine.

\begin{prop}[{\cite[Corollary 2.12]{Shah}}]\label{prop:recon}
    Let $(\C_0,\C_1)$  be a recollement of $\C$. Then there is a pullback square of presentable $\infty$-categories
\[
\xymatrix@!0@R=4pc@C=4pc{
\C \ar[r]^-{i^\ast \eta_j} \ar[d]_{j^\ast} & \C_1^{\Delta[1]} \ar[d]^{\pi_1} \\
\C_0 \ar[r]_{i^\ast j_\ast} & \C_1
}
\]
where $\eta_j \colon \C \to \C^{\Delta[1]}$ is the functor that sends $X \in \C$ to the unit map $X \to j_\ast j^\ast X$, and $\pi_1$ is the projection to the target.
\end{prop}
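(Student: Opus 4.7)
The plan is to upgrade the objectwise Hasse-square reconstruction of \cref{Hasse} to a categorical one. I will first describe the candidate equivalence $\Phi\colon \C \to \C_0 \times_{\C_1} \C_1^{\Delta[1]}$: it sends $X \in \C$ to the pair consisting of $j^\ast X \in \C_0$ together with the morphism $i^\ast \eta_j(X)\colon i^\ast X \to i^\ast j_\ast j^\ast X$, viewed as an object of $\C_1^{\Delta[1]}$. Compatibility with the pullback defining the target is then automatic, because the target of this morphism is exactly $(i^\ast j_\ast)(j^\ast X)$.

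Next, I would construct an explicit candidate inverse $\Psi$ whose definition is guided by the Hasse square. Given an object of the pullback, namely a triple $(X_0, \alpha\colon Y_0 \to Y_1)$ with $Y_1 \simeq i^\ast j_\ast X_0$, assign to it the homotopy pullback
\[
\Psi(X_0, \alpha) := j_\ast X_0 \times_{i_\ast i^\ast j_\ast X_0} i_\ast Y_0
\]
formed in $\C$, where the right-hand leg is $i_\ast \alpha$ and the left-hand leg is the unit transformation $\eta_i$ of $(i^\ast, i_\ast)$ applied to $j_\ast X_0$. One direction of the equivalence $\Psi \circ \Phi \simeq \mathrm{id}_\C$ then follows immediately from \cref{Hasse}, which identifies $X$ with the Hasse pullback $j_\ast j^\ast X \times_{i_\ast i^\ast j_\ast j^\ast X} i_\ast i^\ast X$. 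For the other direction, I would apply $j^\ast$ and $i^\ast$ separately to $\Psi(X_0, \alpha)$: using that $j^\ast i_\ast$ is the constant functor at the terminal object and that $j^\ast$ preserves finite limits, the former recovers $X_0$; applying $i^\ast$, which is also left exact, recovers $Y_0$ via $\alpha$. Joint conservativity of $j^\ast$ and $i^\ast$ (and the analogous statement for morphisms in the pullback) then closes the argument.

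The main obstacle is making these assignments rigorous at the $\infty$-categorical level: one must ensure that $\Phi$ and $\Psi$ assemble into functors of $\infty$-categories compatible with the pullback structure, rather than being merely defined on objects, and that the mutual inverse equivalences above are coherent rather than just pointwise. This is precisely the coherence problem handled by Shah in the cited \cite{Shah}, so one can either invoke that reference directly or, with more care, carry out the construction using the functoriality of the unit/counit transformations of the recollement and the universal property of pullbacks in $\mathrm{Pr}^L$.
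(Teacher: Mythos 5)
The paper does not prove this proposition; it simply cites Shah's Corollary~2.12. So there is no in-paper proof to compare against, and your proposal supplies the argument one would actually give. Your strategy — send $X$ to $\bigl(j^\ast X,\ i^\ast X \to i^\ast j_\ast j^\ast X\bigr)$, define a candidate inverse by the Hasse pullback $\Psi(X_0,\alpha) = j_\ast X_0 \times_{i_\ast i^\ast j_\ast X_0} i_\ast Y_0$, verify one composite via \cref{Hasse} and the other by applying the left exact functors $j^\ast$ and $i^\ast$ and using $j^\ast i_\ast \simeq \ast$ and $i^\ast i_\ast \simeq \mathrm{id}$ — is the standard and correct route, and indeed mirrors how the result is established in Shah.

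Two small points of precision. First, your final appeal to ``joint conservativity of $j^\ast$ and $i^\ast$'' is not quite the right tool for the direction $\Phi\Psi \simeq \mathrm{id}$: a morphism in the pullback $\C_0 \times_{\C_1} \C_1^{\Delta[1]}$ is an equivalence if and only if its projections to $\C_0$ and $\C_1^{\Delta[1]}$ are, which is the universal property of a limit of $\infty$-categories, not joint conservativity (joint conservativity concerns morphisms in $\C$ and is what makes the Hasse-square direction robust). Second, when you say ``applying $i^\ast \dots$ recovers $Y_0$ via $\alpha$,'' you should actually check that under the identifications $j^\ast\Psi(X_0,\alpha) \simeq X_0$ and $i^\ast\Psi(X_0,\alpha) \simeq Y_0$, the unit map $i^\ast\Psi \to i^\ast j_\ast j^\ast \Psi$ becomes $\alpha$; this is a short but genuine diagram chase, not automatic. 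You correctly identify that promoting the pointwise constructions to functors and natural equivalences of $\infty$-categories is the real technical content, and that this coherence is exactly what Shah's treatment handles, so deferring there is appropriate.
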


\begin{ex}\label{ex:running}
Let us explore a comforting example. To this end, let $\C = \sfD(\Z_{(p)})$ be the derived $\infty$-category of $p$-local abelian groups. Then there is a classical recollement of this situation where $\C_0 = \Lambda_p \sfD(\Z_{(p)})$, the category of derived $p$-complete objects, and $\C_1 = \sfD(\Q)$. The functor $j^\ast$ is derived $p$-completion $\Lambda_p$, while $i^\ast$ is rationalization $\mathbb{Q} \otimes -$.

The objectwise reconstruction of \cref{Hasse} retrieves the usual Hasse square. That is, it tells us that any $X \in \sfD(\Z_{(p)})$ can be recovered as the pullback
\[
\xymatrix@!0@R=4pc@C=4pc{
X \ar[r] \ar[d] &  {\mathbb{Q}} \otimes X \ar[d] \\
\Lambda_p X \ar[r] & {\mathbb{Q}} \otimes \Lambda_p X \rlap{.}
}
\]

The categorical reconstruction of \cref{prop:recon} provides us with the following pullback square
\[
\xymatrix{
\sfD({\Z_{(p)}}) \ar[r] \ar[d] & \sfD(\Q)^{\Delta[1]} \ar[d]^{\pi_1} \\
\Lambda_p\sfD(\Z_{(p)}) \ar[r]_-{\Q \otimes -} & \sfD(\Q) \rlap{.}
}
\]
The natural map is given on objects by the following diagram.
\[
X \longmapsto \left( \begin{gathered}\xymatrix@C=0em{
& (\Q \otimes X \to \Q \otimes \Lambda_p X) \ar@{|->}[d] \\
\Lambda_p X \ar@{|->}[r] & \Q \otimes \Lambda_p X
}\end{gathered} \right)
\]
We warn the reader that the category $\Lambda_p \sfD(\Z_{(p)})$ of derived $p$-complete $\Z_{(p)}$-modules is \emph{not} equivalent to the category $\sfD(\Z_p^\wedge)$. For instance, $\Q \otimes \Z_p^\wedge$ is in the latter, but not the former.
\end{ex}

As we will be interested in questions of rigidity, we will need to have an understanding of morphisms between recollements.

\begin{defn}
 Suppose that $(\C_0,\C_1)$ and $(\D_0,\D_1)$ are recollements of $\C$ and $\D$ respectively. Then a functor $F \colon \C \to \D$ is a \emph{lax morphism of recollements} if $F$ sends $j^\ast_{\C}$-equivalences to  $j^\ast_{\D}$-equivalences and $i^\ast_{\C}$-equivalences to  $i^\ast_{\D}$-equivalences. 
\end{defn}

\begin{rem}\label{remark:morofrec}
The definition of lax morphisms of recollements above is somewhat unsatisfactory. One would like to not have to refer to the ambient categories $\C$ and $\D$. Fortunately this has a remedy~\cite[Observations 2.4 and 2.5]{Shah}. Using the notation from above, the functor $F$ provides functors 
\[F_0 = j_{\D}^\ast F {j_{\C}}_\ast \colon \C_0 \to \D_0 \qquad \text{and} \qquad F_1 = i_{\D}^\ast F {i_{\C}}_\ast \colon \C_1 \to \D_1\]
which assemble into a commutative diagram
\[\xymatrix@!0@R=4pc@C=4pc{
	\C_0 \ar[d]_{F_0} &\ar[l]_{j_{\C}^\ast} \C \ar[r]^{i_{\C}^\ast} \ar[d]|{F}& \C_1 \ar[d]^{F_1} \\
	\D_0 &\ar[l]^{j_{\D}^\ast} \D \ar[r]_{i_{\D}^\ast} & \D_1
}\]
such that $F$ is left exact if and only if $F_0$ and $F_1$ are left exact. From this data one obtains a natural transformation $\alpha \colon F_1 i_{\C}^\ast {j_{\C}}_\ast \Rightarrow i_\D^\ast {j_\D}_\ast F_0$. 
Conversely, if we are given $F_0\colon \C_0 \to \D_0$ and $F_1\colon \C_1 \to \D_1$ together with a natural transformation $\alpha \colon F_1 i_{\C}^\ast {j_{\C}}_\ast \Rightarrow i_\D^\ast {j_\D}_\ast F_0$, then these assemble to give a lax morphism of recollements $F\colon \C \to \D$. 

Explicitly, for $X \in \C$ one defines $FX$ via the following pullback
\begin{equation}\label{defnF}
\begin{gathered}
\xymatrix@!0@R=4pc@C=8pc{
FX \ar[r] \ar[dd] & {i_\D}_\ast F_1 i^\ast_{\C} X   \ar[d]^{{i_\D}_\ast F_1 i^\ast_\C \eta} \\ 
& {i_\D}_\ast F_1 i^\ast_{\C} {j_\C}_\ast j_{\C}^\ast X \ar[d]^{{i_\D}_\ast\alpha_{j^\ast_\C X}} \\
{j_\D}_\ast F_0 j_\C^\ast X \ar[r]_-{\eta} & {i_\D}_\ast i_\D^\ast {j_{\D}}_\ast F_0 j_\C^\ast X \rlap{.}
}
\end{gathered}
\end{equation}
\end{rem}

\begin{defn}
A lax morphism of recollements is \emph{strict} if the natural transformation
\[\alpha \colon F_1 i_{\C}^\ast {j_{\C}}_\ast \Rightarrow i_\D^\ast {j_\D}_\ast F_0
\]
is an equivalence, that is, if the following square commutes.
\[\xymatrix@!0@R=4pc@C=4pc{
\mathcal{C}_0 \ar[r]^{i_{\C}^\ast {j_{\C}}_\ast} \ar[d]_{F_0}& \mathcal{C}_1 \ar[d]^{F_1} \\
\mathcal{D}_0 \ar[r]_{i_\D^\ast {j_\D}_\ast} & \mathcal{D}_1 
}\]
\end{defn}

So far we have discussed the general theory of recollements, but we will be working in a much more specific setup.

\begin{defn}\leavevmode
\begin{itemize}
\item Let $\C$ be a presentable stable $\infty$-category, and let $(\C_0,\C_1)$ be a recollement of $\C$. Then this recollement is \emph{stable} if $\C_0$ and $\C_1$ are stable subcategories.
\item Let $(\C, \otimes, \1)$ be a presentable symmetric monoidal $\infty$-category, and let $(\C_0,\C_1)$ be a recollement of $\C$. Then this recollement is \emph{(symmetric) monoidal} if the functors $j_\ast j^\ast$ and $i_\ast i^\ast$ are compatible with the symmetric monoidal structure of $\C$. That is, for every $j^\ast$-equivalence (resp., $i^\ast$-equivalence) $f \colon X \to Y$ and any $Z \in \C$, $f \otimes \mathrm{id} \colon X \otimes Z \to Y \otimes Z$ is a $j^\ast$-equivalence (resp., $i^\ast$-equivalence). In this situation the gluing functor $i^\ast j_\ast$ is lax symmetric monoidal \cite[Observation 2.21]{Shah}. A lax morphism of monoidal recollements is a lax morphism of recollements such that $F \colon \C \to \D$ is a strong monoidal functor. 
\end{itemize}
\end{defn}

\begin{rem}
When we have a stable recollement $(\C_0, \C_1)$ of a presentable stable $\infty$-category $\C$, the functor $i_*$ has a right adjoint $i^!$, and $j^*$ has a fully faithful left adjoint $j_!$. Therefore we obtain the familiar diagram of functors
\[
\xymatrix@C=20mm{
\C_0 \ar@{<-}[r]|{j^\ast} \ar@<-2ex>@{^(->}[r]_{j_\ast}  \ar@<2ex>@{^(->}[r]^{j_!}  & \C \ar@<2ex>@{->}[r]^{i^\ast} \ar@<0ex>@{<-^)}[r]|{i_\ast} \ar@<-2ex>@{->}[r]_{i^!}  & \C_1  \rlap{.}
} 
\]
\end{rem}

We now record the result that will be of interest to us, of when we can check two stable symmetric monoidal recollements are equivalent.

\begin{thm}\label{thm:equivrec}
    Suppose $\C$ and $\D$ are presentable stable symmetric monoidal $\infty$-categories. Let $(\C_0, \C_1)$ and $(\D_0, \D_1)$ be  stable symmetric monoidal recollements of $\C$ and $\D$ respectively and let $F \colon \C \to \D$ be a strict morphism of recollements. Then $F$ is an equivalence if and only if $F_0 \colon \C_0 \to \D_0$ and $F_1 \colon \C_1 \to \D_1$ are equivalences. Moreover, $F$ is a symmetric monoidal equivalence if and only if $F_0$ and $F_1$ are symmetric monoidal equivalences.
\end{thm}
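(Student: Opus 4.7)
The plan is to leverage the categorical reconstruction of \cref{prop:recon}, which identifies $\C$ as the pullback $\C_0 \times_{\C_1} \C_1^{\Delta[1]}$ (and similarly for $\D$). Strictness of $F$ is exactly the statement that the outer square
\[
\xymatrix{
\C_0 \ar[r]^{i_\C^\ast {j_\C}_\ast} \ar[d]_{F_0} & \C_1 \ar[d]^{F_1} \\
\D_0 \ar[r]_{i_\D^\ast {j_\D}_\ast} & \D_1
}
\]
commutes up to natural equivalence. Combined with the induced functor $F_1^{\Delta[1]} \colon \C_1^{\Delta[1]} \to \D_1^{\Delta[1]}$, this yields a morphism of cospans, and the explicit formula~\eqref{defnF} identifies the induced map between pullbacks with $F$ itself.

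For the first biconditional, the ``if'' direction is then immediate: $F_0$ and $F_1$ being equivalences implies $F_1^{\Delta[1]}$ is an equivalence, and pullbacks of $\infty$-categories preserve pointwise equivalences of cospans. For the converse, I would observe that $F^{-1}$ is itself a strict morphism of recollements (the classes of $j^\ast$- and $i^\ast$-equivalences are intrinsic to a recollement, hence are preserved and reflected by any equivalence between the ambient categories), and then check that $(F^{-1})_0 := j_\C^\ast F^{-1} {j_\D}_\ast$ is inverse to $F_0$ using $j^\ast j_\ast \simeq \mathrm{id}$, strictness, and $F^{-1} F \simeq \mathrm{id}$; symmetrically for $F_1$.

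For the second biconditional, the ``only if'' direction is analogous: a symmetric monoidal equivalence $F$ descends to symmetric monoidal functors $F_0, F_1$ on the localizations $\C_0, \C_1$, which inherit their symmetric monoidal structures from $\C, \D$ via the strong monoidal functors $j^\ast, i^\ast$. The ``if'' direction requires upgrading \cref{prop:recon} to a pullback of symmetric monoidal $\infty$-categories, using that the gluing functor $i^\ast j_\ast$ is lax symmetric monoidal~\cite[Observation 2.21]{Shah}. A symmetric monoidal structure on $F$ then corresponds to compatible symmetric monoidal structures on $(F_0, F_1)$ together with a monoidal enhancement of $\alpha$, the latter being automatic once $\alpha$ is an equivalence between strong monoidal functors.

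The main obstacle will be this final monoidal upgrade: one must confirm that the pullback reconstruction lifts to the $\infty$-category of symmetric monoidal $\infty$-categories, so that a symmetric monoidal structure on $F$ is determined by the triple $(F_0, F_1, \alpha)$. In practice this can be handled either by invoking general facts about symmetric monoidal pullbacks, or by directly verifying that the structural maps appearing in~\eqref{defnF} admit monoidal enhancement from the lax monoidality of ${j_\D}_\ast$ and ${i_\D}_\ast$ combined with the assumed monoidal structures on $F_0$ and $F_1$.
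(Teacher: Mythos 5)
The paper's proof is a one-sentence pointer to \cite[Remark 2.7]{Shah}, \cite[Proposition A.8.14]{HA}, and \cite[Observation 2.32]{Shah}; you instead attempt a self-contained argument, which is a genuinely different (and potentially more informative) approach.

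Your ``if'' direction of the first biconditional is fine: strictness yields an equivalence of cospans in \cref{prop:recon}, and the explicit formula (\ref{defnF}) identifies the induced map of pullbacks with $F$. The gap is in the ``only if'' direction. The parenthetical claim that the $j^*$- and $i^*$-equivalences are ``intrinsic to a recollement, hence preserved and reflected by any equivalence between the ambient categories'' is not a valid argument: these classes are determined by the auxiliary recollement data, not by the ambient $\infty$-category alone, and an arbitrary equivalence between $\C$ and $\D$ has no obligation to respect them. What you actually know is that $F$ preserves them (because $F$ is a lax morphism) and that $F$ is invertible; what must then be \emph{proved}, not merely asserted, is that $F$ also reflects them, i.e.\ that the inclusions $F(\ker j_\C^*) \subseteq \ker j_\D^*$ and $F(\ker i_\C^*) \subseteq \ker i_\D^*$ are in fact equalities. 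In the stable setting this does hold, because $(\ker i^*, \ker j^*) = (j_!\C_0, i_*\C_1)$ is a semi-orthogonal decomposition of the ambient category (via the cofibre sequence $j_! j^* \to \mathrm{id} \to i_* i^*$), and an equivalence carrying the pieces of one such decomposition into the pieces of another must match them exactly by uniqueness of the filtration --- but your proof elides this step entirely. Finally, for the monoidal biconditional you correctly isolate the crux, namely that \cref{prop:recon} needs to be promoted to a pullback of symmetric monoidal $\infty$-categories so that monoidal structures on $(F_0, F_1, \alpha)$ assemble to one on $F$, but you do not establish it; this is precisely what \cite[Observation 2.32]{Shah} supplies, so as written your monoidal ``if'' direction remains a sketch rather than a proof.
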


\begin{proof}
    That $F$ is an equivalence if and only if $F_0$ and $F_1$ are is the subject of \cite[Remark 2.7]{Shah} and \cite[Proposition A.8.14]{HA}. For the claim regarding the monoidality we refer the reader to \cite[Observation 2.32]{Shah} and the discussion following it.
\end{proof}

\begin{rem}
    The strictness of the morphism $F$ in \cref{thm:equivrec} is essential to the proof for the converse direction. For our main application in \cref{sec:spectraex}, proving strictness is the meat of the argument.
\end{rem}


\subsection{Recollements from local duality}\label{sec:localdual}

Now that we have seen the abstract theory of recollements and understood how they can be compared, let us introduce a methodology of producing them. The key point is that in the stable setting, a recollement $(\C_0, \C_1)$ is uniquely determined by the local part $\C_1$~\cite[Proposition A.8.20]{HA}. Indeed, if $\C$ is a presentable stable $\infty$-category with $\C_1$ a stable reflective and coreflective subcategory, then we can define $\C_0$ to be the full subcategory of $\C$ spanned by the objects $X \in \C$ such that $\Hom(Z,X) \simeq 0$ for all $Z \in \C_1$. That is, $\C_0 = \C_1^{\perp}$.

One way of forming stable symmetric monoidal recollements of $\C$ is via the use of \emph{smashing localizations} of $\C$. Recall that a localization $L$ of $\C$ is \emph{smashing} if the natural map $L\1 \otimes X \to LX$ is an equivalence for every $X \in \C$. It is with these ideas in mind that we recall the concept of \emph{local duality contexts} from~\cite{BHV}. First, we fix a hypothesis that all of our examples will satisfy.

\begin{hyp}\label{hyp:running}
$(\C , \otimes , \1)$ will be a presentable stable closed symmetric monoidal $\infty$-category which is compactly generated by dualizable objects.
\end{hyp}

Let $\cK$ be a collection of compact objects in $\C$. We call such a pair $(\C, \cK)$ a \emph{local duality context}. We write $\Gamma_\cK\C$ for the localizing tensor-ideal in $\C$ generated by $\cK$ and define $L_\cK \C = (\Gamma_\K\C)^\perp$ and $\Lambda_\cK \C = (L_\K\C)^\perp$. When no confusion is likely to occur we will drop the subscript $\K$ from the notation. We note that these subcategories do not depend on the precise choice of compact objects $\K$, but rather on the thick tensor-ideal which they generate. There are corresponding inclusion functors
\[
\iota_{\Gamma} \colon \Gamma \C \hookrightarrow \C, \quad
\iota_{L} \colon L\C \hookrightarrow \C, \quad
\iota_{\Lambda} \colon \Lambda \C \hookrightarrow \C.
\]

Let us recall some of the salient features of the above formalism from~\cite{axiomatic, GreenleesTate, BHV}. In particular, \cref{prop:localduality}\ref{localduality6} tells us that local duality contexts allow us to obtain recollements.

\begin{prop}\label{prop:localduality}
Let $\C$ satisfy \cref{hyp:running}, and $\cK$ be a set of compact objects in $\C$.
\begin{enumerate}[label=(\arabic*)]
	\item The functors $\iota_\Gamma$ and $\iota_L$ have right adjoints denoted by $\Gamma$ and $V$ respectively, and the functors $\iota_L$ and $\iota_\Lambda$ have left adjoints denoted $L$ and $\Lambda$ respectively. These induce natural cofibre sequences
	 \[\Gamma X \to X \to LX \qquad \text{and} \qquad
    VX \to X \to \Lambda X\]
for all $X \in \C$.
\item The (co)localizations $\Gamma$ and $L$ are smashing.
\item\label{localduality3} The functors $\Lambda \iota_\Gamma \colon \Gamma \C \to \Lambda \C$ and $\Gamma \iota_\Lambda \colon \Lambda \C \to \Gamma \C$ are mutually inverse equivalences of stable $\infty$-categories. Moreover there are natural equivalences of functors
\[\Lambda \Gamma \xrightarrow{\sim} \Lambda \qquad \Gamma \xrightarrow{\sim} \Gamma \Lambda.\]
\item When viewed as endofunctors on $\C$ via the inclusions, the functors $(\Gamma, \Lambda)$ form an adjoint pair in that there is a natural equivalence
    \[\Hom_\C^\C(\Gamma X, Y) \simeq \Hom_\C^\C(X, \Lambda Y)\]
    for all $X,Y \in \C$. In particular we have $\Hom_\C^\C(\Gamma \unit, Y) \simeq \Lambda Y$.
    \item For every $X \in \C$  there is a pullback square
    \[
    \xymatrix@!0@R=4pc@C=4pc{X \ar[r] \ar[d] & LX \ar[d] \\ \Lambda X \ar[r] & L \Lambda X \rlap{.}}
    \]
    whose vertical and horizontal fibres are $VX$ and $\Gamma X$ respectively. \label{localduality5}
    \item The pair $(\Lambda \C , L \C)$ is a stable symmetric monoidal recollement of $\C$ where the gluing data is described by the inclusion followed by the localization $L$.\label{localduality6}
\end{enumerate}
\end{prop}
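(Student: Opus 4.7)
The proposition collects the foundational results of local duality from~\cite{GreenleesTate,axiomatic,BHV}, which transfer to the $\infty$-categorical setting of \cref{hyp:running}. My plan is to treat (1)--(2) as formal consequences of presentability and compact generation, (3)--(4) as the core local duality content, and (5)--(6) as assembly steps.

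For (1), the localizing ideal $\Gamma\C$ is presentable since $\K$ is a set of compact objects, and $\iota_\Gamma$ preserves colimits, so the adjoint functor theorem yields the right adjoint $\Gamma$; the Bousfield reflection onto $L\C = (\Gamma\C)^\perp$ produces $L$, with the cofiber sequence $\Gamma X \to X \to LX$ encoding that $LX$ localizes away from $\Gamma\C$. Dually one obtains the reflection $\Lambda$ and the fiber $V$ for $\Lambda\C = (L\C)^\perp$. For (2), smashing of $\Gamma$ follows because $\Gamma\C$ is a tensor-ideal generated by compact objects: the map $\Gamma\1 \otimes X \to X$ has source in $\Gamma\C$ (tensor-ideality) and cofiber $L\1 \otimes X$ in $L\C$ (which one checks by evaluating against the generators of $\Gamma\C$, exploiting dualizability), so it must agree with $\Gamma X \to X$ by uniqueness of the $\Gamma$/$L$ cofiber decomposition. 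Smashing of $L$ follows from the same cofiber sequence.

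For (4), closure of the symmetric monoidal structure combined with smashing gives
\[\Hom_\C^\C(\Gamma X, Y) \simeq \Hom_\C^\C(\Gamma\1 \otimes X, Y) \simeq \Hom_\C^\C(X, \Hom_\C^\C(\Gamma\1, Y)),\]
after which I would verify that $\Hom_\C^\C(\Gamma\1, -)$ is valued in $\Lambda\C$ (as $\Gamma\1 \otimes L\1 \simeq 0$, which is $L\Gamma\1 \simeq 0$ via smashing) and realises the reflection $\Lambda$; this yields both the internal adjunction and the identification $\Lambda Y \simeq \Hom_\C^\C(\Gamma\1, Y)$. For (3), the adjunction restricts to the essential images and makes $\Lambda\iota_\Gamma$ and $\Gamma\iota_\Lambda$ mutually inverse, while $\Lambda\Gamma \simeq \Lambda$ and $\Gamma \simeq \Gamma\Lambda$ follow by applying $\Lambda$ and $\Gamma$ to the cofiber sequence $\Gamma X \to X \to LX$ together with $\Lambda L \simeq 0 \simeq \Gamma L$.

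Finally, (5) amounts to observing that the square commutes by naturality of $X \to \Lambda X$, and is a pullback because its horizontal fibers coincide: the top fiber is $\Gamma X$ by definition, the bottom is $\Gamma\Lambda X \simeq \Gamma X$ by (3). For (6), I would set $j_\ast = \iota_\Lambda$, $j^\ast = \Lambda$, $i_\ast = \iota_L$, $i^\ast = L$; the vanishing $j^\ast i_\ast = \Lambda\iota_L \simeq 0$ is $\Lambda L = 0$, joint conservativity of $(\Lambda, L)$ follows from the cofiber sequence in (1) combined with the equivalence of (3), and the monoidal recollement structure is immediate from smashing of $L$ (part 2) together with the internal-hom description of $\Lambda$ from (4). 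The one essential technical point in a self-contained treatment is the smashing property (2): this is precisely the step that requires \cref{hyp:running}, as compact generation by dualizable objects is what forces the cofiber of $\Gamma\1 \otimes X \to X$ to be $L$-local.
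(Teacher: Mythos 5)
The paper does not actually prove this proposition; it is recalled from the cited references \cite{axiomatic, GreenleesTate, BHV}, so there is no in-paper proof to compare against. Your proof plan is a correct reconstruction of the standard local-duality argument as it appears in those sources, with all the essential ideas in the right place: adjoint functor theorem plus compact generation for (1), dualizability of generators to establish $L$-locality of $L\1\otimes X$ for (2), the internal-hom formula $\Lambda\simeq\Hom_\C^\C(\Gamma\1,-)$ as the engine of (3) and (4), comparison of horizontal fibres for (5), and the $\Lambda$-acyclics being exactly $L\C$ for joint conservativity and monoidality in (6).

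Two small organizational points. First, there is a mild circularity: in step (1) you claim the existence of $\Lambda$ "dually," but the clean construction of $\Lambda$ (and the verification that it lands in, and reflects onto, $(L\C)^\perp$) is what you actually carry out in step (4) via $\Hom_\C^\C(\Gamma\1,-)$; it is cleaner to establish $\Gamma$, $L$, $V$ and smashing first, then define $\Lambda$ by the internal-hom formula and deduce its adjoint property, rather than invoke an independent dual argument. Second, in step (2) the phrase "evaluating against the generators of $\Gamma\C$, exploiting dualizability" is doing real work and deserves a sentence: the generators of $\Gamma\C$ are of the form $K\otimes G$ with $K\in\K$ and $G$ a dualizable compact generator, and one moves $K\otimes G$ across the mapping object using duals to reduce to $\Hom(\Gamma\text{-object}, L\1)\simeq 0$; it is exactly here that \cref{hyp:running} (compact generation by dualizables) is used, as you correctly flag at the end. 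With these tidied, the plan is sound and matches the cited literature.
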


 

\begin{ex}
We note that the recollement from \cref{ex:running} may be obtained from the local duality context $(\msf{D}(\mathbb{Z}_{(p)}), \{\mathbb{Z}_{(p)}/p\})$.
\end{ex}

We end this section with an observation regarding how local duality interacts with tt-equivalences. There are more general statements of this form in the literature where the functor need not be an equivalence, but this is not required for us. We refer the interested reader to~\cite[Proposition 2.7]{Stevenson} for the more general statement.

\begin{lem}\label{restricts}
Let $\K$ be a set of compact objects of $\C$ and let $\Phi\colon h\C \to h\D$ be a tt-equivalence. Then there are equivalences
\begin{enumerate}[label=(\roman*)]
\item $\Phi \Gamma_\K X \simeq \Gamma_{\Phi \K} \Phi X$ for every $X \in \C$,\label{l1:i}
\item $\Phi L_\K X \simeq L_{\Phi \K} \Phi X$ for every $X \in \C$,\label{l1:ii}
\item $\Phi\Lambda_\K X  \simeq \Lambda_{\Phi \K} \Phi X$ for every $X \in \C$.\label{l1:iii}
\end{enumerate}
\end{lem}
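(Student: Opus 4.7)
The plan is to apply $\Phi$ to the natural cofiber sequences furnished by \cref{prop:localduality} and exploit uniqueness of the resulting decompositions on the $\D$ side. The key preliminary observation is that $\Phi$ restricts to an equivalence $\Gamma_\K\C \xrightarrow{\simeq} \Gamma_{\Phi\K}\D$: indeed, the localizing tensor-ideal $\Gamma_\K\C$ is built from $\K$ using shifts, coproducts, triangles, retracts, and $\otimes$ with arbitrary objects, all of which are preserved by a tt-equivalence; applying the same reasoning to $\Phi^{-1}$ yields the reverse inclusion.

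For \ref{l1:i} and \ref{l1:ii}, apply $\Phi$ to the cofiber sequence $\Gamma_\K X \to X \to L_\K X$ of \cref{prop:localduality} to obtain a cofiber sequence
\[
\Phi \Gamma_\K X \to \Phi X \to \Phi L_\K X
\]
in $h\D$. The preliminary observation shows $\Phi \Gamma_\K X \in \Gamma_{\Phi\K}\D$. To see $\Phi L_\K X \in L_{\Phi\K}\D = (\Gamma_{\Phi\K}\D)^\perp$, note that any $z \in \Gamma_{\Phi\K}\D$ can be written as $\Phi z'$ for some $z' \in \Gamma_\K\C$; then the vanishing of $\Hom(z', L_\K X)$ in all degrees transports along the equivalence $\Phi$ to vanishing of $\Hom(z, \Phi L_\K X)$. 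Uniqueness of the localization triangle associated to the recollement $(\Lambda_\K\C, L_\K\C)$ on the $\D$ side now identifies the above cofiber sequence with $\Gamma_{\Phi\K}\Phi X \to \Phi X \to L_{\Phi\K}\Phi X$, yielding both \ref{l1:i} and \ref{l1:ii}.

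For \ref{l1:iii} the argument is entirely analogous, starting from the cofiber sequence $VX \to X \to \Lambda_\K X$ of \cref{prop:localduality}. Since $VX$ lies in $L_\K\C$ by construction, part \ref{l1:ii} gives $\Phi VX \in L_{\Phi\K}\D$, while the same perpendicularity argument as before, with $L$ in place of $\Gamma$, produces $\Phi \Lambda_\K X \in (L_{\Phi\K}\D)^\perp = \Lambda_{\Phi\K}\D$. Uniqueness of the corresponding decomposition of $\Phi X$ then yields the desired equivalence. The only real content of the proof is the preliminary observation that $\Phi$ intertwines the two localizing tensor-ideals; once this is in hand, everything else is mechanical, combining orthogonality with uniqueness of the defining cofiber sequences.
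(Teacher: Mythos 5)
Your proof is correct and takes essentially the same approach as the paper: both arguments hinge on the fact that a tt-equivalence carries the localizing tensor-ideal generated by $\K$ onto that generated by $\Phi\K$, and then invoke the (essential) uniqueness of the torsion/local and acyclic/complete decompositions of $\Phi X$. The paper phrases this tersely via the cellular-equivalence characterization of $\Gamma_{\Phi\K}\Phi X$, whereas you spell out the equivalent orthogonality argument on the cofiber; the content is the same.
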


\begin{proof}
    For \ref{l1:i} it suffices to prove that $\Phi \Gamma_{\K} X$ is in the localizing tensor-ideal $\mathrm{Loc}^{\otimes}(\Phi \K)$ and that the natural map $\Phi \Gamma_\K X \to \Phi X$ is a $(\Phi \K \otimes \G)$-cellular equivalence where $\G$ is a set of compact generators for $\D$. As $\Phi$ is an equivalence these are clear. Parts \ref{l1:ii} and \ref{l1:iii} follow similarly. 
\end{proof}

\subsection{The strategy}\label{sec:meta}

We are now ready to assemble the strategy that we will use to deduce tt-rigidity via the theory of recollements.

We fix $\C$ satisfying \cref{hyp:running}. We also fix a set of compact objects $\K$ to form a local duality context $(\C,\K)$, and write $L$ and $\Lambda$ for the associated localization and completion functors. 
Suppose we are given a tt-equivalence $\Phi\colon h\C \to h\D$, for $\D$ a presentable stable closed symmetric monoidal $\infty$-category; as such $\D$ also satisfies \cref{hyp:running} since the compact generation can be verified at the homotopy level~\cite[Remark 1.4.4.3]{HA}. 
For brevity, we then write $L'$ and $\Lambda'$ for the localization and completion associated to the local duality context $(\D, \Phi(\K))$ passed along the equivalence $\Phi$.

The category of complete objects $\Lambda\C$ inherits a closed symmetric monoidal structure, with monoidal product given by the \emph{completed tensor product} $- \whotimes - := \Lambda(- \otimes -)$, internal hom the same as in the underlying category, and tensor unit $\Lambda\1_{\C}$. On the other hand, since $L$ is a smashing localization, in $L\C$ the tensor product is the same as in the underlying category, and the tensor unit is $L\1_{\C}$.

\begin{lem}\label{ttrestricts}
Let $\Phi\colon h\C \xrightarrow{\sim} h\D$ be a tensor-triangulated equivalence, and let $\K$ be a set of compact objects in $\C$. Then $\Phi$ restricts to give tt-equivalences $hL\C \xrightarrow{\sim} hL'\D$ and $h\Lambda\C \xrightarrow{\sim} h\Lambda'\D$. 
\end{lem}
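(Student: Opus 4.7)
The plan is to use \cref{restricts} as the main lever, together with the fact that the inverse $\Phi^{-1}$ of a tt-equivalence is itself a tt-equivalence that sends $\Phi(\K)$ back to $\K$.

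First I would check that $\Phi$ restricts to a functor at the level of underlying categories. If $X \in hL\C$, then $X \simeq LX$, so by \cref{restricts}\ref{l1:ii} we have $\Phi X \simeq \Phi L X \simeq L'\Phi X$, showing $\Phi X \in hL'\D$. The analogous argument using \cref{restricts}\ref{l1:iii} shows that $\Phi$ restricts to a functor $h\Lambda\C \to h\Lambda'\D$. Running the same argument with $\Phi^{-1}$ in place of $\Phi$ gives restrictions in the reverse direction, and since $\Phi$ and $\Phi^{-1}$ are mutually inverse on $h\C$ and $h\D$ they remain mutually inverse on the restricted subcategories. Hence each restriction is an equivalence of categories, and since $hL\C$, $hL'\D$, $h\Lambda\C$, $h\Lambda'\D$ are all triangulated subcategories preserved by $\Phi$ (respectively $\Phi^{-1}$), these are triangulated equivalences.

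It remains to verify that the restrictions respect the symmetric monoidal structures. For the local case, $L$ is smashing by \cref{prop:localduality}, so the tensor product on $hL\C$ is simply inherited from $h\C$, with unit $L\1_\C$; likewise for $hL'\D$. The monoidality of the restriction is then immediate from the monoidality of $\Phi$, and the unit is preserved since $\Phi(L\1_\C) \simeq L'\Phi(\1_\C) \simeq L'\1_\D$ by \cref{restricts}\ref{l1:ii} and the fact that $\Phi$ is unital.

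The complete case requires a brief computation because the tensor product is the completed tensor product $X \whotimes Y = \Lambda(X \otimes Y)$ with unit $\Lambda \1_\C$. For $X, Y \in h\Lambda\C$, using \cref{restricts}\ref{l1:iii} and the monoidality of $\Phi$ on $h\C$, we compute
\[
\Phi(X \whotimes Y) = \Phi \Lambda(X \otimes Y) \simeq \Lambda' \Phi(X \otimes Y) \simeq \Lambda'(\Phi X \otimes \Phi Y) = \Phi X \,\widehat{\otimes}'\, \Phi Y,
\]
where $\widehat{\otimes}'$ denotes the completed tensor on $h\Lambda'\D$. The same identity shows that $\Phi(\Lambda \1_\C) \simeq \Lambda' \1_\D$, so the units match. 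I expect the only mildly delicate point is bookkeeping the coherence of these natural equivalences with the associator and symmetry, but these all come from the ambient tt-equivalence $\Phi$ via the identities above, so no new input is needed.
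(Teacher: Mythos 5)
Your proof is correct and takes essentially the same route as the paper's: both reduce the triangulated part to \cref{restricts}, observe the local case is immediate since $L$ is smashing, and verify the complete case via the same chain of equivalences $\Phi\Lambda(X\otimes Y)\simeq\Lambda'\Phi(X\otimes Y)\simeq\Lambda'(\Phi X\otimes\Phi Y)$. You spell out a few intermediate details (the $\Phi^{-1}$ argument, the unit identifications) that the paper leaves implicit, but the underlying argument is identical.
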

\begin{proof}
That $\Phi$ restricts to a triangulated equivalence on the relevant categories is immediate from \cref{restricts}, so it remains to check that the restrictions are monoidal. As $L$ is smashing, this is clear for the local case, so it suffices to check for $\Lambda$. There are equivalences
\[\Phi(X \whotimes Y) = \Phi(\Lambda(X \otimes Y)) \simeq \Lambda'(\Phi(X \otimes Y)) \simeq \Lambda'(\Phi X \otimes \Phi Y) = \Phi X \whotimes \Phi Y \] from which the result follows.
\end{proof}

We now suppose that $L\C$ and $\Lambda\C$ are tt-rigid. Therefore by \cref{ttrestricts} we have equivalences $F_L\colon L\C \xrightarrow{\sim} L'\D$ and $F_\Lambda\colon \Lambda\C \xrightarrow{\sim} \Lambda'\D$ on the level of $\infty$-categories. 
We consider the following diagram
\begin{equation}\label{diagram}
\begin{gathered}\xymatrix{
\Lambda \C \ar[d]_{F_\Lambda}^{\simeq} \ar[r]^{L}& L\C \ar[d]^{F_L}_{\simeq} \\ 
\Lambda'\D \ar[r]_{L'}& L' \D \rlap{.}
}
\end{gathered}
\end{equation}
If the diagram (\ref{diagram}) commutes, then the induced functor $F\colon \C \to \D$ of \cref{remark:morofrec} is an equivalence by \cref{thm:equivrec}. 
In particular, $\C$ is tt-rigid. 
Now if $L\C$ and $\Lambda\C$ were \emph{unitally} tt-rigid, then so is $\C$. Indeed, by (\ref{defnF}), one sees that $F(\1_\C)$ can be described as the pullback 
\[\xymatrix{
F(\1_\C) \ar[r] \ar[d] & F_L(L\1_\C) \ar[d] \\
F_\Lambda(\Lambda\1_\C) \ar[r] & L'F_\Lambda(\Lambda\1_\C)
\rlap{.} } \]
As $F_L$ and $F_\Lambda$ preserve the respective tensor units, this pullback coincides with the decomposition of $\1_\D$ as in \cref{Hasse} (also see \cref{prop:localduality}\ref{localduality5}). 
Thus, $F(\1_\C) \simeq \1_\D$ and $\C$ is unitally tt-rigid.
Finally, if $L\C$ and $\Lambda\C$ were \emph{strongly} tt-rigid, then $\C$ is strongly tt-rigid by \cref{thm:equivrec}. In summary, we have proved the following strategy for proving tt-rigidity.

\begin{thm}\label{metatheorem}
Let $\C$ satisfy \cref{hyp:running}. Suppose that for any tt-equivalence $\Phi \colon h \C \to h \D$ there is a local duality context $(\C,\K)$ with associated localization $L$ and completion $\Lambda$ such that
\begin{enumerate}
    \item $L \C$ and $\Lambda \C$ are tt-rigid (resp., unitally tt-rigid, resp., strongly tt-rigid),
    \item the induced diagram (\ref{diagram}) commutes.
\end{enumerate}
Then $\C$ is tt-rigid (resp., unitally tt-rigid, resp., strongly tt-rigid).
\end{thm}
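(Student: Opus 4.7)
The proof essentially assembles the ingredients developed in the preceding subsections; I would present it as a verification of each step in the strategy sketched before the statement.

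The plan is to start with an arbitrary tt-equivalence $\Phi \colon h\C \xrightarrow{\sim} h\D$ and produce an equivalence of $\infty$-categories (with the desired monoidal/unital refinements) by applying \cref{thm:equivrec} to the recollements coming from local duality. First I would note that $\D$ also satisfies \cref{hyp:running}: closed symmetric monoidality is part of the assumption, while compact generation by dualizable objects may be transported along $\Phi$ since it can be checked at the homotopy level~\cite[Remark~1.4.4.3]{HA}. Then I invoke the hypothesis of the theorem to obtain a local duality context $(\C,\K)$ whose local and complete parts $L\C$ and $\Lambda\C$ are (unitally/strongly) tt-rigid, and transfer $\K$ along $\Phi$ to obtain the corresponding local duality context $(\D, \Phi(\K))$ with associated localization $L'$ and completion $\Lambda'$.

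Next, I would apply \cref{ttrestricts} to conclude that $\Phi$ restricts to tt-equivalences $hL\C \xrightarrow{\sim} hL'\D$ and $h\Lambda\C \xrightarrow{\sim} h\Lambda'\D$. Applying the assumed (unital/strong) tt-rigidity of $L\C$ and $\Lambda\C$ to these restricted tt-equivalences produces equivalences of $\infty$-categories $F_L \colon L\C \xrightarrow{\sim} L'\D$ and $F_\Lambda \colon \Lambda\C \xrightarrow{\sim} \Lambda'\D$, which in the strong case are even symmetric monoidal and in the unital case preserve units. By \cref{prop:localduality}\ref{localduality6} the pairs $(\Lambda\C, L\C)$ and $(\Lambda'\D, L'\D)$ are stable symmetric monoidal recollements of $\C$ and $\D$ respectively, and the gluing functor is, up to the equivalence $\Lambda \iota_\Gamma$ from \cref{prop:localduality}\ref{localduality3}, given by the composite $L \iota_\Lambda$ appearing in diagram~\eqref{diagram}. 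Hence the commutativity hypothesis of~\eqref{diagram} is precisely the statement that the pair $(F_\Lambda, F_L)$, together with the identity natural transformation witnessing the commuting square, defines a \emph{strict} morphism of symmetric monoidal recollements in the sense of \cref{remark:morofrec}.

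Having produced this strict morphism of recollements, I would invoke the formula~\eqref{defnF} of \cref{remark:morofrec} to construct the induced functor $F \colon \C \to \D$ and then conclude via \cref{thm:equivrec} that $F$ is an equivalence of $\infty$-categories, which is moreover symmetric monoidal whenever $F_L$ and $F_\Lambda$ are. This yields tt-rigidity (resp.\ strong tt-rigidity) of $\C$ directly. For the unital case, I would complete the argument by combining~\eqref{defnF} with \cref{Hasse} (equivalently \cref{prop:localduality}\ref{localduality5}): the pullback computing $F(\1_\C)$ has corners $F_\Lambda(\Lambda \1_\C) \simeq \Lambda' \1_\D$ and $F_L(L \1_\C) \simeq L' \1_\D$ (using that $F_L$ and $F_\Lambda$ preserve tensor units), so it coincides with the canonical Hasse decomposition of $\1_\D$, giving $F(\1_\C) \simeq \1_\D$.

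The only point that requires genuine verification beyond bookkeeping is that the commuting square~\eqref{diagram} really does encode strictness of the morphism of recollements, and hence that \cref{thm:equivrec} applies; this is the step on which the whole argument hinges, and is exactly where the compatibility condition in the hypothesis earns its keep. In applications such as the one in \cref{sec:spectraex}, it is verifying this compatibility — not the formal assembly carried out above — that constitutes the main technical work.
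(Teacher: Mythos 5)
Your proposal is correct and follows essentially the same argument as the paper: restrict $\Phi$ to the local and complete parts via \cref{ttrestricts}, apply the assumed rigidity to obtain $F_L$ and $F_\Lambda$, read the commuting square~\eqref{diagram} as strictness of the resulting morphism of recollements, and conclude via \cref{thm:equivrec}, handling the unital case by comparing the pullback formula~\eqref{defnF} against the Hasse decomposition of $\1_\D$. Two tiny imprecisions worth polishing but not affecting validity: the gluing functor of the recollement $(\Lambda\C, L\C)$ is literally $L\iota_\Lambda$ (no passage through $\Lambda\iota_\Gamma$ is needed, since the paper presents the recollement with $\Lambda\C$ as the complete part rather than $\Gamma\C$), and the witness to strictness is a natural \emph{equivalence} $\alpha$, not the identity transformation.
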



\begin{rem}
Although \cref{metatheorem} asks for the choice of a local duality context for each equivalence $\Phi \colon h \C \to h \D$, in practice, there will usually be a single local duality context that covers all cases as we will see in \cref{sec:spectraex}. Moreover, with a tensor-triangular view on the situation, the choice of local duality context is often suggested by the philosophy of considering the local duality context provided by the closed points of a Noetherian Balmer spectrum.
\end{rem}

\begin{rem}
We have focused on recollements as the reconstruction technique here, but other reconstructions are available in the literature. For example, there is the adelic module model~\cite{adelicm} which provides a more algebraic reconstruction of the category at hand, replacing the category of complete modules $\Lambda\C$ with the category of modules over the completed unit, $\bmod{\Lambda\1}$. However in this setting, the analogue of \cref{ttrestricts} is no longer evident and we expect it to be largely dependent on the particular example at hand. One particular example where this model has proved a powerful tool is in rational equivariant stable homotopy theory~\cite{GStorus}. For rational torus-equivariant spectra of arbitrary rank $r$, the adelic module model provides a diagram indexed on a punctured $(r-1)$-cube, each of whose vertices is strongly tt-rigid via \cref{formalhomotopy}. So the remaining obstruction to applying \cref{metatheorem} in this case, is proving an analogue of \cref{ttrestricts}.
\end{rem}

\section{Rigidity in chromatic homotopy theory}\label{sec:spectraex}

In this section we will apply \cref{metatheorem} to the case $\C = L_n \Sp_{(p)}$ and use it to prove that the tt-rigidity of the $E(n)$-local category is controlled by the $K(i)$-local categories for $i \leqslant n$. 

We begin in \cref{subsec:specialfunctors} discussing properties of left adjoints out of $\Sp$ which will be an essential ingredient in our main proof. It moreover allows us to prove that localizations of spectra are unitally tt-rigid if and only if they are strongly tt-rigid, as well as giving a criterion for proving unital tt-rigidity from rigidity. We then provide the proof of the aforementioned rigidity result in \cref{subsec:mainres}.

\subsection{The universal property of spectra and consequences}\label{subsec:specialfunctors}

Let $\D$ be a presentable stable $\infty$-category. Then in particular $\D$ is enriched and tensored over spectra~\cite[Proposition 4.8.2.18]{HA}. That is, for any two objects $A,B \in \D$ we have a mapping spectrum $\Hom^{\Sp}_{\D}(A,B) \in \Sp$, and for any $X \in \Sp$ and $A \in \D$, we have $A \odot X \in \D$ satisfying the usual enriched adjunction
\[\Hom_\D^\Sp(A \odot X, B) \simeq \Hom_\Sp^\Sp(X, \Hom_\D^\Sp(A,B))\] for all $A, B \in \D$ and $X \in \Sp$. We also recall that there is a natural equivalence
\begin{equation}\label{tensoroftensor}
A \odot (X \otimes Y) \simeq (A \odot X) \odot Y
\end{equation} for all $A \in \D$ and $X,Y \in \Sp$, by a standard adjunction argument. For clarity, we emphasize that we use $\odot$ for the enriched tensor, and $\otimes$ for the monoidal tensor. 

\begin{prop}\label{tensors}
Let $\D$ be a presentable stable $\infty$-category. 
\begin{enumerate}[label=(\roman*)]
\item Evaluation at the sphere spectrum $S^0$ yields an equivalence of $\infty$-categories \[\mathrm{Fun}^\mathrm{L}(\Sp,\D) \xrightarrow{\sim} \D\] where $\mathrm{Fun}^\mathrm{L}(-,-)$ denotes the $\infty$-category of colimit-preserving functors.
\item\label{tensor:2} Any colimit-preserving functor $F \colon \Sp \to \D$ is of the form
\[
F(S^0) \odot - \colon \Sp \to \D.
\]
\item\label{tensor:3} Moreover, a local version also holds: for any localization $L$ of spectra, any colimit-preserving functor
$F \colon L \Sp \to \D$ is of the form
\[F(LS^0) \odot -\colon L\Sp \to \D.\]
\end{enumerate}
\end{prop}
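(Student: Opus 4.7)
The plan is to derive all three parts from the universal property of $\Sp$ as the free presentable stable $\infty$-category on one generator. For part~(i), I would appeal to this universal property from \cite[\S 1.4]{HA}: evaluation at $S^0$ is an equivalence $\mathrm{Fun}^{\mathrm{L}}(\Sp,\D) \xrightarrow{\sim} \D$, whose inverse sends $A \in \D$ to $A \odot -$. Conceptually, this records that $\Sp$ arises as the stabilization of pointed spaces and is initial among presentable stable $\infty$-categories.

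For part~(ii), the functor $F(S^0) \odot - \colon \Sp \to \D$ is colimit-preserving because $\odot$ is a two-variable left adjoint (the right adjoint in the second variable being $\Hom_\D^\Sp(F(S^0),-)$). Evaluating at $S^0$ gives $F(S^0) \odot S^0 \simeq F(S^0)$ by the unit axiom of the enrichment, so part~(i) forces the desired equivalence $F \simeq F(S^0) \odot -$, as both are colimit-preserving functors $\Sp \to \D$ agreeing at $S^0$.

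For part~(iii), I would use that $L \colon \Sp \to L\Sp$ is a reflective localization with fully faithful right adjoint, and that consequently colimits in $L\Sp$ are computed by applying $L$ to colimits in $\Sp$. Given a colimit-preserving $F \colon L\Sp \to \D$, the composite $F \circ L \colon \Sp \to \D$ preserves colimits, and part~(ii) yields $F \circ L \simeq F(LS^0) \odot -$. For $Y \in L\Sp$ the unit $Y \to LY$ is an equivalence, whence
\[F(Y) \simeq F(LY) = (F \circ L)(Y) \simeq F(LS^0) \odot Y,\]
as required.

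I do not expect any real obstacle, since each part is essentially an application of the universal property of $\Sp$. The one subtlety worth keeping in mind is that $\odot$ in part~(iii) refers to the $\Sp$-enrichment of $\D$ (not an $L\Sp$-enrichment), so the formula $F(LS^0) \odot Y$ tensors $F(LS^0) \in \D$ with the underlying spectrum of $Y$; the argument implicitly verifies that this tensor descends to $L\Sp$ because it agrees with $F \circ L$, which manifestly factors through $L$.
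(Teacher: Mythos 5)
Your proposal is correct and follows essentially the same route as the paper: part (i) is the universal property of $\Sp$ \cite[Corollary 1.4.4.6]{HA} with quasi-inverse $A \mapsto A \odot -$, part (ii) follows since $F(S^0)\odot -$ is the image of $F(S^0)$ under that quasi-inverse, and part (iii) is obtained by applying part (ii) to the colimit-preserving composite $\Sp \xrightarrow{L} L\Sp \xrightarrow{F} \D$ and restricting along the identification of $L\Sp$ with the local objects. Your closing remark, that the $\odot$ in part (iii) is the $\Sp$-tensor applied to the underlying spectrum of a local object, is a correct and worthwhile observation that the paper leaves implicit.
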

\begin{proof}
The equivalence of $\infty$-categories $\mathrm{Fun}^\mathrm{L}(\Sp,\D) \xrightarrow{\simeq} \D$ is the universal property of spectra~\cite[Corollary 1.4.4.6]{HA}. A quasi-inverse to this equivalence is given by $A \mapsto A \odot -$ from which the identification of colimit-preserving functors follows. The local version follows from the general case applied to the colimit-preserving composite $\Sp \xrightarrow{L} L\Sp \xrightarrow{F} \D$.
\end{proof}

\begin{lem}\label{tensorandtensor}
Let $\D$ be a presentable stable closed symmetric monoidal $\infty$-category. For any $A, B \in \D$, the functors $(A \otimes B) \odot -\colon \Sp \to \D$ and $A \otimes (B \odot -)\colon \Sp \to \D$ are naturally equivalent.
\end{lem}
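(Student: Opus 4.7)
The plan is to invoke the universal property of spectra from \cref{tensors} to reduce everything to checking the value of the two functors on $S^0$. Concretely, I would proceed as follows.

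First, I would argue that both functors $(A \otimes B) \odot - \colon \Sp \to \D$ and $A \otimes (B \odot -) \colon \Sp \to \D$ are colimit-preserving. For the first, this follows because the spectral tensoring $- \odot -$ is a left adjoint in each variable, and so preserves colimits in the $\Sp$-variable. For the second, $B \odot -$ preserves colimits for the same reason, and $A \otimes -$ preserves colimits because $\D$ is presentable and closed symmetric monoidal (so $A \otimes -$ is a left adjoint).

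Next, I would evaluate each functor at the sphere spectrum $S^0$. Since $S^0$ is the monoidal unit of $\Sp$ and $\D$ is tensored over $\Sp$, we have $X \odot S^0 \simeq X$ for any $X \in \D$. Therefore $(A \otimes B) \odot S^0 \simeq A \otimes B$ and $A \otimes (B \odot S^0) \simeq A \otimes B$.

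Finally, I would apply \cref{tensors}\ref{tensor:2}: any colimit-preserving functor $F \colon \Sp \to \D$ is naturally equivalent to $F(S^0) \odot -$. Applied to each of our two functors, this yields natural equivalences
\[
(A \otimes B) \odot - \simeq \bigl((A \otimes B) \odot S^0\bigr) \odot - \simeq (A \otimes B) \odot -
\]
and
\[
A \otimes (B \odot -) \simeq \bigl(A \otimes (B \odot S^0)\bigr) \odot - \simeq (A \otimes B) \odot -,
\]
which together give the desired natural equivalence. There is no real obstacle here; the result is essentially a formal consequence of the universal property of $\Sp$ as the free presentable stable $\infty$-category on one generator, combined with the fact that $A \otimes -$ is cocontinuous. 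The only thing to be careful about is not confusing the two notations $\odot$ (spectral tensoring) and $\otimes$ (internal monoidal product), so one should keep track that the compatibility \eqref{tensoroftensor} is not what is being asserted here.
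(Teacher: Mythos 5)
Your proof is correct and follows essentially the same approach as the paper: apply the universal property of $\Sp$ (\cref{tensors}\ref{tensor:2}) to the colimit-preserving functor $A\otimes(B\odot -)$ and evaluate at $S^0$. The only cosmetic difference is that the paper applies the universal property only to $A\otimes(B\odot -)$, since $(A\otimes B)\odot -$ is already in the normal form $X\odot -$ and so there is nothing to check on that side.
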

\begin{proof}
The functor $G = A \otimes (B \odot -)\colon \Sp \to \D$ is colimit-preserving, and so is of the form $G(S^0) \odot -$ by \cref{tensors}\ref{tensor:2}. We have $G(S^0) \simeq A \otimes B$ which gives the claim.
\end{proof}

As a consequence we have the following, 
which may also be found (in a slightly different setting) as~\cite[Theorem 6.4]{stable_frames} and~\cite[Corollary 4.8.2.19]{HA}. 
\begin{cor}\label{tensoringismonoidal}
Let $\D$ be a presentable stable closed symmetric monoidal $\infty$-category. Then the functor $\1_\D \odot -\colon \Sp \to \D$ is symmetric monoidal.
\end{cor}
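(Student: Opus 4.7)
The plan is to exploit the symmetric monoidal universal property of spectra among presentable stable $\infty$-categories. Recall that $\Sp$ is the tensor unit in the symmetric monoidal $\infty$-category $\mathrm{Pr}^{\mathrm{L},\mathrm{st}}$ of presentable stable $\infty$-categories under the Lurie tensor product. Since $\D$ is presentable, stable, and closed symmetric monoidal, the tensor product preserves colimits in each variable, so $\D$ is precisely a commutative algebra object in $\mathrm{Pr}^{\mathrm{L},\mathrm{st}}$. The unit map of such a commutative algebra is an essentially unique symmetric monoidal colimit-preserving functor $u_\D \colon \Sp \to \D$; this is the content of~\cite[Corollary 4.8.2.19]{HA}.

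By construction $u_\D(S^0) \simeq \1_\D$. On the other hand, \cref{tensors}\ref{tensor:2} tells us that every colimit-preserving functor $\Sp \to \D$ is of the form $F(S^0) \odot -$, so $u_\D$ must be equivalent, as a colimit-preserving functor, to $\1_\D \odot -$. Transporting the symmetric monoidal structure on $u_\D$ across this equivalence equips $\1_\D \odot -$ with the desired symmetric monoidal refinement.

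For sanity, monoidality can be verified at the level of objects directly from the preceding lemmas. Applying \cref{tensorandtensor} with $A = \1_\D \odot X$ and $B = \1_\D$, then invoking the unit isomorphism $A \otimes \1_\D \simeq A$ together with (\ref{tensoroftensor}), one obtains
\[
(\1_\D \odot X) \otimes (\1_\D \odot Y) \simeq ((\1_\D \odot X) \otimes \1_\D) \odot Y \simeq (\1_\D \odot X) \odot Y \simeq \1_\D \odot (X \otimes Y),
\]
together with $\1_\D \odot S^0 \simeq \1_\D$ for the unit.

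I expect the main obstacle to be not the existence of these pointwise equivalences, but the assembly of the coherence data required to upgrade $\1_\D \odot -$ to a symmetric monoidal functor in the $\infty$-categorical sense. This is precisely why the universal-property approach is preferable: the coherences are produced all at once from the commutative algebra structure of $\D$ in $\mathrm{Pr}^{\mathrm{L},\mathrm{st}}$, rather than being constructed by hand from the bifunctorial equivalences above.
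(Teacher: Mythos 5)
Your proof is correct, but it takes a different route from the one the paper chose. You invoke the symmetric monoidal universal property of $\Sp$ as the unit of $\mathrm{Pr}^{\mathrm{L},\mathrm{st}}$ (essentially~\cite[Corollary 4.8.2.19]{HA}): since $\D$ is a commutative algebra in $\mathrm{Pr}^{\mathrm{L},\mathrm{st}}$, its unit map $\Sp\to\D$ is an essentially unique symmetric monoidal colimit-preserving functor, which by \cref{tensors}\ref{tensor:2} must agree with $\1_\D\odot -$, and the coherences come for free. The paper, by contrast, proves the statement by hand: it observes that $\1_\D\odot -$ is left adjoint to the lax symmetric monoidal functor $\Hom_\D^\Sp(\1_\D,-)$, hence is automatically \emph{oplax} symmetric monoidal, and then uses the plain (non-monoidal) universal property of $\Sp$ — via the functor $G=(\1_\D\odot X)\otimes(\1_\D\odot -)\simeq G(S^0)\odot -$ — to show that the oplax structure maps are equivalences. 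Your approach is cleaner conceptually and handles the coherence data in one stroke, at the cost of invoking a heavier black box; the paper actually flags this alternative, citing~\cite[Corollary 4.8.2.19]{HA} in the sentence just before the corollary statement, but elects to give the more elementary adjoint argument, which only uses the universal property already quoted as \cref{tensors}. One small remark on your ``sanity check'': you correctly note that the pointwise equivalences alone do not assemble the coherence data, and the computation you give — applying \cref{tensorandtensor} with $A=\1_\D\odot X$, $B=\1_\D$, then the unitor and (\ref{tensoroftensor}) — is in fact a slight rearrangement of exactly the pointwise content the paper extracts from its $G\simeq G(S^0)\odot -$ step, so the two arguments reconnect there.
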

\begin{proof}
Firstly, note that $\1_\D \odot S^0 \simeq \1_\D$ so that the functor preserves the unit. The right adjoint of $\1_\D \odot -$ is lax symmetric monoidal, so $\1_\D \odot -$ is oplax symmetric monoidal. Therefore it suffices to check that $\1_\D \odot (X \otimes Y) \to (\1_\D \odot X) \otimes (\1_\D \odot Y)$ is an equivalence for all $X, Y \in \Sp$. 
Consider the functor \[G = (\1_\D \odot X) \otimes (\1_\D \odot -)\colon \Sp \to \D.\]
This is colimit-preserving, and so by \cref{tensors}\ref{tensor:2}, we have $G \simeq G(S^0) \odot -$.  
We have $G(S^0) \simeq \1_\D \odot X$, and therefore \[G = (\1_\D \odot X) \otimes (\1_\D \odot -) \simeq (\1_\D \odot X) \odot -\] which in turn is equivalent to $\1_\D \odot (X \otimes -)$ by (\ref{tensoroftensor}) as required.
\end{proof}



Recall that for any localization $L$ of spectra, the category $L\Sp$ of $L$-local spectra inherits a closed symmetric monoidal structure with monoidal product given by the localized tensor product $- \whotimes - := L(- \otimes -)$, and monoidal unit $LS^0$. When $L$ is a smashing localization, the localized tensor product agrees with the underlying tensor product of spectra.

For $\D$ a presentable stable $\infty$-category there is an enriched tensor $A \odot - \colon \Sp \to \D$ for any $A \in \D$ as recalled above. For any localization $L$ of $\Sp$, this provides us with a functor $A \odot - \colon L \Sp \to \D$ via restriction.

\begin{lem}\label{localhoms}
Let $L$ be any localization of spectra, and let $\D$ be a presentable stable closed symmetric monoidal $\infty$-category. 
If $\1_\D \odot -\colon L\Sp \to \D$ is an equivalence, then $\Hom_\D^\Sp(\1_\D, A)$ is $L$-local for any $A \in \D$.
\end{lem}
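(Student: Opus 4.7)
The plan is to exploit the fact that $F_L := \1_\D \odot - \colon L\Sp \to \D$, being an equivalence of $\infty$-categories between presentable stable ones, is automatically an equivalence of stable $\infty$-categories and therefore preserves mapping spectra. This will let me transport the mapping spectrum $\Hom_\D^\Sp(\1_\D, A)$ back to a mapping spectrum in $L\Sp$, where $L$-locality is transparent.

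First, I would let $G_L \colon \D \to L\Sp$ be a quasi-inverse of $F_L$. Since an equivalence of stable $\infty$-categories is canonically $\Sp$-enriched, it induces a natural equivalence
$$\Hom_\D^\Sp(\1_\D, A) \simeq \Hom_{L\Sp}^\Sp(G_L \1_\D, G_L A)$$
for every $A \in \D$. Second, because $L\Sp$ sits as a full stable subcategory of $\Sp$, the mapping spectrum computed in $L\Sp$ agrees with the one computed in $\Sp$, so the right-hand side coincides with $\Hom_\Sp^\Sp(G_L \1_\D, G_L A)$.

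Finally, I would verify that for any $L$-local spectrum $Y$ and any $X \in \Sp$, the spectrum $\Hom_\Sp^\Sp(X, Y)$ is $L$-local: by the convention that a localization satisfies $L(Z \otimes X) \simeq 0$ whenever $LZ \simeq 0$, for any $L$-acyclic $Z$ one has $[Z, \Hom_\Sp^\Sp(X, Y)]_\ast \simeq [Z \otimes X, Y]_\ast \simeq [L(Z \otimes X), Y]_\ast = 0$ since $Y$ is $L$-local. Specializing to $X = G_L \1_\D$ and $Y = G_L A$, both of which lie in $L\Sp$, will finish the proof.

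I do not foresee any significant obstacle; each step is a standard manipulation. The only slightly delicate point is the assertion that an equivalence of presentable stable $\infty$-categories automatically preserves mapping spectra, but this is immediate once one recalls that the $\Sp$-enrichment on such a category is canonical and preserved by exact functors.
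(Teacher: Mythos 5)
Your proof is correct and takes essentially the same approach as the paper: the paper also transports $\Hom_\D^\Sp(\1_\D, A)$ to a mapping spectrum $\Hom_\Sp^\Sp(X,Y)$ between $L$-local spectra using the equivalence, and then verifies $L$-locality of $\Hom_\Sp^\Sp(X,Y)$ via the same adjunction argument with an $L$-acyclic test object $Z$. The only cosmetic difference is that you name a quasi-inverse $G_L$, whereas the paper simply chooses preimages $X, Y \in L\Sp$ of $\1_\D$ and $A$ directly.
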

\begin{proof}
    Since $\1_\D \odot -$ is an equivalence, we have objects $X,Y \in L\Sp$ such that $\1_\D \odot X \simeq \1_\D$ and $\1_\D \odot Y \simeq A$. Therefore
    \[\Hom_\D^\Sp(\1_\D,A) \simeq \Hom_\D^\Sp(\1_\D \odot X, \1_\D \odot Y) \simeq \Hom_\Sp^\Sp(X,Y).\] 
    As $Y$ is $L$-local, $\Hom_\Sp^\Sp(X,Y)$ is $L$-local. Indeed, to prove this it suffices to show that if $LZ \simeq 0$, then \[\Hom_\Sp^\Sp(Z,\Hom_\Sp^\Sp(X,Y)) \simeq 0.\] If $LZ \simeq 0$, then $L(X \otimes Z) \simeq 0$ by definition of a localization, and hence by adjunction the claim follows.
\end{proof}

\begin{prop}\label{rigidimpliesunitally}
Let $L$ be any localization of spectra, and let $\D$ be a presentable stable closed symmetric monoidal $\infty$-category. If $\1_\D \odot -\colon L\Sp \to \D$ is an equivalence, then $\1_\D \odot LS^0 \simeq \1_\D$.
\end{prop}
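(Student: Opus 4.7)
The plan is to exploit the enriched adjunction
\[
\Hom_\D^{\Sp}(\1_\D \odot X, A) \simeq \Hom_\Sp^{\Sp}(X, \Hom_\D^{\Sp}(\1_\D, A))
\]
together with \cref{localhoms}. First, note that the $L$-localization map $\eta \colon S^0 \to LS^0$ induces, via the functor $\1_\D \odot -$, a canonical morphism
\[
\1_\D \simeq \1_\D \odot S^0 \longrightarrow \1_\D \odot LS^0
\]
in $\D$. I would show that this map is an equivalence by applying the Yoneda lemma: it suffices to check that the induced map
\[
\Hom_\D^{\Sp}(\1_\D \odot LS^0, A) \longrightarrow \Hom_\D^{\Sp}(\1_\D, A)
\]
is an equivalence for every $A \in \D$.

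Under the enriched adjunction above, this map is identified with the map
\[
\Hom_\Sp^{\Sp}(LS^0, \Hom_\D^{\Sp}(\1_\D, A)) \longrightarrow \Hom_\Sp^{\Sp}(S^0, \Hom_\D^{\Sp}(\1_\D, A))
\]
induced by $\eta$. By \cref{localhoms}, the hypothesis that $\1_\D \odot - \colon L\Sp \to \D$ is an equivalence guarantees that $\Hom_\D^{\Sp}(\1_\D, A)$ is an $L$-local spectrum. Since $\eta$ is an $L$-equivalence and the target spectrum is $L$-local, this map is an equivalence; both sides identify with $\Hom_\D^{\Sp}(\1_\D, A)$.

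Running this for all $A \in \D$ and invoking Yoneda then yields $\1_\D \odot LS^0 \simeq \1_\D$, as desired. There is essentially no obstacle here: the bulk of the argument has already been packaged into \cref{localhoms}, and the remainder is a naturality check of the enriched adjunction together with the universal property of the $L$-localization of $S^0$.
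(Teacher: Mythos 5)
Your proof is correct and is essentially the same as the paper's: both show the canonical map $\1_\D \odot S^0 \to \1_\D \odot LS^0$ is an equivalence by passing through the $\odot$-$\Hom$ adjunction, using \cref{localhoms} to see that $\Hom_\D^\Sp(\1_\D,A)$ is $L$-local so that $\eta\colon S^0 \to LS^0$ induces an equivalence on mapping spectra, and then concluding by Yoneda.
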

\begin{proof}
Recall that $\1_\D \odot S^0 \simeq \1_\D$. So we argue that the natural map $\1_\D \odot S^0 \to \1_\D \odot LS^0$ is an equivalence. As $\1_\D \odot -\colon L\Sp \to \D$ is an equivalence by assumption, $\Hom_\D^\Sp(\1_\D,A)$ is $L$-local for all $A \in L\D$ by \cref{localhoms}. As such, by adjunction we have equivalences
\begin{align*}
    \Hom_\D^\Sp(\1_\D \odot LS^0, A) &\simeq \Hom_\Sp^\Sp(LS^0, \Hom_\D^\Sp(\1_\D, A)) \\& \simeq \Hom_\Sp^\Sp(S^0, \Hom_\D^\Sp(\1_\D, A)) \\& \simeq \Hom_\D^\Sp(\1_\D \odot S^0, A)
\end{align*}
so the claim follows.
\end{proof}

In \cref{ex:unitally}, we recalled that the category $\Sp$, along with some of its localizations are known to be rigid. In the proofs of these results, the functor realising the equivalence between the $\infty$-categories is of the form given in \cref{rigidimpliesunitally}. As such, we conclude that these examples are not just rigid, but in fact unitally tt-rigid.

What is more, we now show that unital tt-rigidity is equivalent to strong tt-rigidity for localizations of spectra. This is a generalization of a theorem of Shipley~\cite[Theorem 4.7]{ShipleyMonoidal}, using the language of tt-rigidity. 
\begin{prop}\label{unitallyimpliesstrongly}
Let $L$ be a localization of spectra. Then $L\Sp$ is strongly tt-rigid if and only if it is unitally tt-rigid.
\end{prop}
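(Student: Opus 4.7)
The forward implication is immediate from \cref{defn:difrigid}, since a symmetric monoidal equivalence preserves the tensor unit up to equivalence. So the content of the proposition is the converse, which I would prove by using the universal properties developed earlier in this section to promote a unit-preserving equivalence to a symmetric monoidal one.

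Concretely, suppose $L\Sp$ is unitally tt-rigid and fix a tt-equivalence $\Phi \colon h(L\Sp) \xrightarrow{\sim} h\D$ with $\D$ a presentable stable closed symmetric monoidal $\infty$-category. By hypothesis there is an equivalence of $\infty$-categories $F \colon L\Sp \xrightarrow{\sim} \D$ with $F(LS^0) \simeq \1_\D$. The first step is to analyze the composite $F \circ L \colon \Sp \to \D$. This functor is colimit-preserving (as $L$ is a left adjoint and $F$ is an equivalence) and sends $S^0$ to $\1_\D$, so by the universal property of spectra in the form of \cref{tensors}\ref{tensor:2}, there is a natural equivalence $F \circ L \simeq \1_\D \odot -$. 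In particular, by \cref{tensoringismonoidal}, the composite $F \circ L$ is symmetric monoidal.

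The second step is to descend this symmetric monoidal structure back through $L$. Since $F$ is an equivalence and $L$ by definition sends $L$-equivalences to equivalences, the symmetric monoidal colimit-preserving functor $\1_\D \odot - \colon \Sp \to \D$ sends $L$-equivalences to equivalences. By the universal property of the symmetric monoidal localization $L \colon \Sp \to L\Sp$, it therefore factors (up to contractible choice) as a symmetric monoidal colimit-preserving functor $\widetilde{F} \colon L\Sp \to \D$ with $\widetilde{F} \circ L \simeq \1_\D \odot -$. Now both $F$ and $\widetilde{F}$ are colimit-preserving functors $L\Sp \to \D$ whose precompositions with $L$ are equivalent, so the (non-monoidal) universal property of the localization gives $F \simeq \widetilde{F}$. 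Hence $\widetilde{F}$ is an equivalence that happens to be symmetric monoidal, which witnesses strong tt-rigidity of $L\Sp$.

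The only non-formal ingredient is the verification that $\1_\D \odot - \colon \Sp \to \D$ inverts $L$-equivalences, and this step is where the hypothesis of unital tt-rigidity is genuinely used: without an a priori equivalence $F$ matching the tensor units, one would have no reason to identify $F \circ L$ with $\1_\D \odot -$ via \cref{tensors}\ref{tensor:2}, and consequently no reason to expect $\1_\D \odot -$ to factor through $L\Sp$. Everything else is an application of the universal properties already established in \cref{tensors} and \cref{tensoringismonoidal}.
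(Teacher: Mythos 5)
Your proof is correct and follows the paper's basic strategy — use the universal property of spectra together with unitality to identify $F$ with $\1_\D \odot -$, then invoke \cref{tensoringismonoidal} for monoidality — but differs in the final step of deducing monoidality of the restriction to $L\Sp$. The paper verifies directly that the natural map $\1_\D \odot (X \otimes Y) \to \1_\D \odot L(X\otimes Y)$ is an equivalence via \cref{localhoms}, whereas you observe that $\1_\D \odot - \colon \Sp \to \D$ inverts $L$-equivalences (simply because it is equivalent to $F\circ L$ with $F$ an equivalence), invoke the universal property of the symmetric monoidal localization $L\colon \Sp \to L\Sp$ to produce a symmetric monoidal $\widetilde F\colon L\Sp \to \D$, and then identify $\widetilde F \simeq F$ by the non-monoidal universal property. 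Your route avoids \cref{localhoms} entirely and is arguably slightly cleaner, trading an explicit verification for an appeal to the universal property of the monoidal localization; the paper's is more hands-on. Your remark about where the unitality hypothesis is genuinely used is also accurate: without $F(LS^0)\simeq \1_\D$ one would only know $F \circ L \simeq F(LS^0)\odot -$, and \cref{tensoringismonoidal} would not apply.
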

\begin{proof}
Suppose that $L\Sp$ is unitally tt-rigid. Any equivalence $F\colon L\Sp \to \D$ is of the form $F(LS^0) \odot -$ by \cref{tensors}\ref{tensor:3}. 
By the unitality assumption, we know that $F(LS^0) \simeq \1_\D$, so $F \simeq \1_\D \odot -$. 
This is symmetric monoidal as a functor $\Sp \to \D$ by \cref{tensoringismonoidal}, but we must check that its restriction to a functor $L\Sp \to \D$ is symmetric monoidal. 
In order to do this, by the definition of the monoidal product in $L\Sp$, it suffices to check that the natural map \[\1_\D \odot (X \otimes Y) \to \1_\D \odot L(X \otimes Y)\] is an equivalence. This follows from \cref{localhoms} in a similar way to the proof of \cref{rigidimpliesunitally}.
Therefore $F\colon L\Sp \to \D$ is a symmetric monoidal equivalence, and hence $L\Sp$ is strongly tt-rigid.
\end{proof}

With the previous two results in hand, we may now return to \cref{ex:unitally}.
\begin{ex}\label{ex:spectra}
    The categories $\Sp$, $L_1\Sp_{(2)}$ and $L_{K(1)}\Sp_{(2)}$ are all strongly tt-rigid. 
    Write $\C$ to denote any of $\Sp$, $L_1\Sp_{(2)}$ or $L_{K(1)}\Sp_{(2)}$. 
    We suppose we have a tt-equivalence $\Phi\colon h\C \to h\D$. 
    In each of the 3 cases, the functor $\Phi(\1_\C) \odot -$ is known to be an equivalence, see~\cite{Schwederigidity, Roitzheimrigidity, Ishakrigidity} respectively. 
    As $\Phi$ is a tt-functor, $\Phi(\1_\C) \simeq \1_\D$. 
    Therefore, by \cref{rigidimpliesunitally} we see that $\Sp$, $L_1\Sp_{(2)}$ and $L_{K(1)}\Sp_{(2)}$ are unitally tt-rigid. 
    Applying \cref{unitallyimpliesstrongly} shows that they are all moreover strongly tt-rigid.

\end{ex}

We finish this section with an auxiliary lemma which will be needed in \cref{subsec:mainres}.

\begin{lem}\label{enrichedlocaltensor}
Let $\D$ be a  presentable stable closed symmetric monoidal $\infty$-category and $L$ be any localization of $\D$. Write $\odot$ for the enriched tensor of $\D$ over $\Sp$.
\begin{enumerate}[label=(\roman*)]
\item The enriched tensor of $L\D$ over $\Sp$ is given by $- \whodot - := L(- \odot -)$.\label{l3:i}
\item If $L$ is smashing, the enriched tensor of $L\D$ over $\Sp$ is $\odot$.\label{l3:ii}
\end{enumerate}
\end{lem}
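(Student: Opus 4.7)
The plan is to verify both parts by using the defining universal property of an $\Sp$-enrichment together with the fully faithful reflective inclusion $\iota\colon L\D \hookrightarrow \D$. Recall that an enriched tensoring is characterised up to equivalence by the natural adjunction $\Hom^\Sp(-\whodot X, B) \simeq \Hom_\Sp^\Sp(X, \Hom^\Sp(-, B))$, so it suffices to exhibit the candidate functors as left adjoints to $\Hom^\Sp_{L\D}(-,-)$ in the first variable.

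For \ref{l3:i}, I would check that for $A, B \in L\D$ and $X \in \Sp$ there is a natural equivalence
\[
\Hom_{L\D}^\Sp\bigl(L(A \odot X), B\bigr) \simeq \Hom_\Sp^\Sp\bigl(X, \Hom_{L\D}^\Sp(A,B)\bigr).
\]
Since $\iota$ is fully faithful, the mapping spectra of $L\D$ agree with those computed in $\D$. The adjunction $L \dashv \iota$ then gives $\Hom_\D^\Sp(L(A \odot X), B) \simeq \Hom_\D^\Sp(A \odot X, B)$ because $B$ is $L$-local, and the existing $\Sp$-enrichment of $\D$ identifies this with $\Hom_\Sp^\Sp(X, \Hom_\D^\Sp(A,B))$. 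Naturality in all three variables is inherited from the corresponding naturality in $\D$, so $L(- \odot -)$ is indeed the enriched tensoring of $L\D$ over $\Sp$.

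For \ref{l3:ii}, assuming $L$ is smashing I would show directly that $A \odot X$ is already $L$-local whenever $A \in L\D$, so that $L(A \odot X) \simeq A \odot X$ and hence $\whodot = \odot$ on $L\D$ by \ref{l3:i}. Indeed, using $L \simeq L\1_\D \otimes -$ and \cref{tensorandtensor}, we obtain the chain
\[
L(A \odot X) \simeq L\1_\D \otimes (A \odot X) \simeq (L\1_\D \otimes A) \odot X \simeq A \odot X,
\]
where the final step uses $L\1_\D \otimes A \simeq LA \simeq A$ for $A \in L\D$.

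I do not expect a serious obstacle here; the only mild subtlety is to keep track of the distinction between the enriched tensor $\odot$ and the monoidal tensor $\otimes$, and to verify that the identifications above are natural rather than merely pointwise equivalences, which is automatic since each step is a naturally occurring adjunction unit/counit or an instance of \cref{tensorandtensor}.
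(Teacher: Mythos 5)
Your proof is correct and takes essentially the same approach as the paper. The paper dispatches part \ref{l3:i} as immediate from the universal property (you spell out the adjunction calculation), and your chain for part \ref{l3:ii} is the paper's chain $A \odot X \simeq (L\1_\D \otimes A) \odot X \simeq L\1_\D \otimes (A \odot X) \simeq A \whodot X$ read in the reverse direction.
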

\begin{proof}
Part \ref{l3:i} follows from the defining universal property. For part \ref{l3:ii}, if $A$ is $L$-local, \[A \odot X \simeq (L\1_\D \otimes A) \odot X \simeq L\1_\D \otimes (A \odot X) \simeq A \whodot X\] as required, where the second equivalence follows from \cref{tensorandtensor}.
\end{proof}

\subsection{Tensor-triangular rigidity in chromatic homotopy theory}\label{subsec:mainres}

Throughout the rest of this paper, we work $p$-locally and suppress this from the notation.  In this section we will prove our main results regarding tt-rigidity in chromatic homotopy theory. Our key result is the following.
\begin{thm}\label{main}
Let $n\geqslant 1$. If $L_{n-1}\Sp$ and $L_{K(n)}\Sp$ are unitally tt-rigid, then $L_n\Sp$ is strongly tt-rigid.
\end{thm}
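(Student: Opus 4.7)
The plan is to apply the meta-theorem \cref{metatheorem} to $\C = L_n\Sp$ with the local duality context $(L_n\Sp, \{F(n)\})$ for $F(n)$ a finite type $n$ spectrum. Under this setup the induced localization is $L = L_{n-1}$ and the induced completion is $\Lambda = L_{K(n)}$, so the local and complete parts are $L_{n-1}\Sp$ and $L_{K(n)}\Sp$ respectively, and \cref{hyp:running} is satisfied. By hypothesis both of these subcategories are unitally tt-rigid, and \cref{unitallyimpliesstrongly} promotes this to strong tt-rigidity.

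Given a tt-equivalence $\Phi \colon hL_n\Sp \xrightarrow{\sim} h\D$, \cref{ttrestricts} restricts $\Phi$ to tt-equivalences on the $L$- and $\Lambda$-parts, which by strong tt-rigidity lift to strong tt-equivalences $F_L \colon L_{n-1}\Sp \xrightarrow{\sim} L'\D$ and $F_\Lambda \colon L_{K(n)}\Sp \xrightarrow{\sim} \Lambda'\D$. By the proof of \cref{unitallyimpliesstrongly} and \cref{enrichedlocaltensor}, we may identify these with the canonical enriched-tensor functors $F_L \simeq \1_{L'\D} \odot -$ and $F_\Lambda \simeq \Lambda'(\Lambda'\1_\D \odot -)$. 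All the inputs to \cref{metatheorem} are in place except the commutativity of diagram (\ref{diagram}).

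The main obstacle is verifying that commutativity, and this is where the bulk of the technical work sits. Both $F_L \circ L_{n-1}$ and $L' \circ F_\Lambda$ are colimit-preserving functors $L_{K(n)}\Sp \to L'\D$, so by \cref{tensors}\ref{tensor:3} each is of the form $A \odot -$ and is determined by its value at $L_{K(n)}S^0$. Using \cref{tensorandtensor}, \cref{tensoringismonoidal}, the smashing-ness of $L'$ on $\D$, and the identity $\1_{L'\D} \odot L_{n-1}S^0 \simeq L'\1_\D$ coming from the unitality of $F_L$, one simplifies the left-hand composition to $F_L L_{n-1}(X) \simeq L'(\1_\D \odot X)$ for $X \in L_{K(n)}\Sp$. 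A parallel computation using \cref{enrichedlocaltensor} for the right-hand composition, together with the identification $F_\Lambda(L_{K(n)}S^0) \simeq \Lambda'\1_\D$ and the compatibility of $\Phi$ with the chromatic fracture square on $L_n\Sp$ (via \cref{restricts}) and its counterpart on $\D$ (via \cref{prop:localduality}\ref{localduality5}), identifies $L'F_\Lambda(L_{K(n)}S^0)$ with $L'\Lambda'\1_\D$ and matches this with the value of the left composition on $L_{K(n)}S^0$. Once commutativity is established, \cref{metatheorem} immediately delivers the strong tt-rigidity of $L_n\Sp$.
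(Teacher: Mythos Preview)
Your overall setup matches the paper's, but the core step --- verifying that (\ref{diagram}) commutes --- has a genuine gap. You assert that both $F_L \circ L_{n-1}$ and $L' \circ F_\Lambda$ are colimit-preserving functors $L_{K(n)}\Sp \to L'\D$, and then invoke \cref{tensors}\ref{tensor:3} to reduce to checking a single value. But the gluing functor $L_{n-1}\colon L_{K(n)}\Sp \to L_{n-1}\Sp$ is the composite $i^\ast j_\ast$ of a \emph{right} adjoint $j_\ast$ (the inclusion $L_{K(n)}\Sp \hookrightarrow L_n\Sp$) with a left adjoint, and it does \emph{not} preserve colimits: coproducts in $L_{K(n)}\Sp$ are $K(n)$-localized coproducts, and $L_{n-1}L_{K(n)}(\bigoplus X_i)$ differs from $L_{n-1}(\bigoplus X_i)$ in general (already for $n=1$ one has $L_0 L_{K(1)}S^0 \not\simeq L_0 S^0$, since $\pi_0$ is $\Q_p$ versus $\Q$). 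The same failure occurs for $L'\colon \Lambda'\D \to L'\D$. So neither composite is of the form $A \odot -$ on $L_{K(n)}\Sp$, and agreement at $L_{K(n)}S^0$ does not determine the functors.

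The paper circumvents this by never claiming colimit-preservation of the gluing functors. Instead it (i) constructs an explicit natural transformation $\alpha$ between the two composites (\cref{lax}), (ii) uses a large diagram chase to verify that the induced functor $F$ satisfies $F(L_nS^0)\simeq \1_\D$ (\cref{lem:Funit}), hence $F\simeq \1_\D \odot -$, and then (iii) invokes the $K(n)$-local thick subcategory theorem to show that the two natural local duality contexts on $\D$ --- the one from $\Phi(F(n))$ and the one from $\1_\D \odot F(n)$ --- coincide (\cref{equalthicks}), which is what finally forces $\alpha$ to be an equivalence. Your sketch hides exactly these steps; in particular the identification $F_L L_{n-1}(X) \simeq L'(\1_\D \odot X)$ you write down is essentially the content of \cref{lem:iffnthengood}, and establishing it already requires the thick subcategory argument, not just formal manipulations with $\odot$.
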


Before giving the proof, let us first record the following corollary of this theorem which follows by a simple inductive argument and the observation that $L_0 \Sp \simeq \bmod{H\Q}$ is strongly tt-rigid (e.g., by \cref{formalhomotopy}).
\begin{cor}\label{K(n)}
Let $n \geqslant 1$. If $L_{K(i)}\Sp$ is unitally tt-rigid for all $1 \leqslant i \leqslant n$, then $L_n\Sp$ is strongly tt-rigid. \qed
\end{cor}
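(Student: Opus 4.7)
The plan is to prove this by induction on $n$, using \cref{main} as the inductive step and \cref{formalhomotopy} to supply the base case.

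For the base case $n = 1$, note that $L_0 \Sp \simeq \bmod{H\Q}$: the homotopy groups of $H\Q$ are concentrated in degree zero, so $\pi_\ast(H\Q) = \Q$ is trivially intrinsically formal as a commutative DGA, and hence $\bmod{H\Q}$ is strongly tt-rigid by \cref{formalhomotopy}. In particular it is unitally tt-rigid. By hypothesis $L_{K(1)}\Sp$ is unitally tt-rigid, so \cref{main} applies and gives that $L_1\Sp$ is strongly tt-rigid.

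For the inductive step, assume that $L_{n-1}\Sp$ is strongly tt-rigid for some $n \geqslant 2$. In particular, from the hierarchy of implications recorded after \cref{defn:difrigid}, it is unitally tt-rigid. Combined with the hypothesis that $L_{K(n)}\Sp$ is unitally tt-rigid, another application of \cref{main} yields that $L_n\Sp$ is strongly tt-rigid. This completes the induction.

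There is no real obstacle here: the corollary is essentially a bookkeeping exercise, the only substantive inputs being \cref{main} (which supplies the inductive step) and \cref{formalhomotopy} (which supplies the rational base case). The one thing worth being careful about is that \cref{main} produces \emph{strong} tt-rigidity from \emph{unital} tt-rigidity of the two inputs, so that the strengthened conclusion at stage $n-1$ can be fed back in as the weaker hypothesis needed at stage $n$, allowing the induction to close.
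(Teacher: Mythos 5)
Your proof is correct and is exactly the argument the paper indicates (induction on $n$ via \cref{main}, with the base case $L_0\Sp \simeq \bmod{H\Q}$ supplied by \cref{formalhomotopy}); you have simply spelled out the bookkeeping that the paper leaves implicit. The observation that strong tt-rigidity at stage $n-1$ feeds back in as the weaker unital hypothesis at stage $n$ is precisely the point that makes the induction close.
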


 We now turn to providing a proof of \cref{main}. We note that the category $\Sp$ and its localizations $L_n \Sp$ satisfy \cref{hyp:running}, so that we can implement the strategy devised in \cref{metatheorem}.  Since the proof requires several lemmas and steps, we begin by fixing the setup and describing the key elements of the proof.
\begin{strat}\label{strat}
Suppose that there is a tt-equivalence $\Phi\colon hL_n\Sp \to h\D$ where $\D$ is a  presentable stable closed symmetric monoidal $\infty$-category. To apply \cref{metatheorem}, we have three key steps.
\begin{enumerate}
    \item Pick a suitable local duality context on $L_n \Sp$.
    \item Prove that there is a lax morphism of recollements between the chosen local duality context and the one on $\D$ induced by the equivalence.
    \item Prove that this lax morphism is actually strict.
\end{enumerate}
We will address each of these in turn, which will then assemble to give a proof of \cref{main}.
\end{strat}

Henceforth we assume the setup of \cref{strat}. We consider the local duality context $(L_n\Sp, F(n))$ where $F(n)$ is a finite type $n$ complex. 
The associated localization functor is $L_{n-1}$ and the associated completion is $L_{K(n)}$, and this yields the recollement $(L_{K(n)}\Sp, L_{n-1}\Sp)$ of $L_n\Sp$ \cite[\S 6]{BHV}. 
We consider the corresponding local duality context $(\D, \Phi(F(n)))$ and write $\Gamma$, $L$ and $\Lambda$ for the associated torsion, localization, and completion functors.

By \cref{restricts}, the tt-equivalence $\Phi$ restricts to the local and complete parts.  By assumption, $L_{n-1}\Sp$ and $L_{K(n)}\Sp$ are unitally tt-rigid, and as such by \cref{unitallyimpliesstrongly} are strongly tt-rigid. Therefore we obtain symmetric monoidal equivalences \[F_L\colon L_{n-1}\Sp \xrightarrow{\simeq} L\D \quad \text{and} \quad F_\Lambda\colon L_{K(n)}\Sp \xrightarrow{\simeq} \Lambda\D.\] 

By \cref{tensors}\ref{tensor:3}, we have $F_L \simeq F_L(L_{n-1}S^0) \odot - \simeq L\1_\D \odot -$ and similarly $F_\Lambda \simeq \Lambda\1_\D \whodot -$ where we implicitly used \cref{enrichedlocaltensor} to identify the enriched tensors of $L\D$ and $\Lambda\D$ over spectra. Since $F_L$ and $F_\Lambda$ are symmetric monoidal equivalences, in particular we have equivalences
\begin{equation}\label{units}
    \rho \colon L\1_\D \odot L_{n-1}S^0 \xrightarrow{\simeq} L\1_\D \quad \text{and} \quad \xi \colon \Lambda\1_\D \whodot L_{K(n)}S^0 \xrightarrow{\simeq} \Lambda\1_\D.
\end{equation}

All of this discussion leads to the following square
\begin{equation}\label{firstsquare}
\begin{gathered}\xymatrix{
    L_{K(n)} \Sp \ar[r]^{L_{n-1}} \ar[d]_{\Lambda \1_\D \whodot -}^{\simeq} & L_{n-1} \Sp \ar[d]_{\simeq}^{L \1_\D\odot -} \\
    \Lambda \D \ar[r]_{L} & L \D \rlap{.}
}\end{gathered}
\end{equation}

\begin{lem}\label{lax}
    There is a natural transformation $\alpha\colon L\1_\D \odot L_{n-1}(-) \Rightarrow L(\Lambda\1_\D \whodot -)$ which provides a lax morphism of recollements $L_n\Sp \to \D$.
\end{lem}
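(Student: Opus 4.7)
The plan is to construct the natural transformation $\alpha$ explicitly; once this is done, \cref{remark:morofrec} guarantees that the data $(F_L, F_\Lambda, \alpha)$ assembles via the pullback formula \eqref{defnF} into a lax morphism of recollements $L_n\Sp \to \D$. Hence the essential task is to produce, naturally in $X \in L_{K(n)}\Sp$, a map $\alpha_X \colon L\1_\D \odot L_{n-1}X \to L(\Lambda\1_\D \whodot X)$ in $L\D$, using only the structural maps of the two local duality contexts together with the tensor identities of \cref{subsec:specialfunctors}.

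Concretely, I would build $\alpha_X$ as the following composite of canonical maps. Since $L_{n-1}$ is smashing on spectra, $L_{n-1}X \simeq L_{n-1}S^0 \otimes X$, so applying \eqref{tensoroftensor} together with the unit equivalence $\rho$ from \eqref{units} gives
\[ L\1_\D \odot L_{n-1}X \;\simeq\; (L\1_\D \odot L_{n-1}S^0) \odot X \;\xrightarrow{\rho \,\odot\, X}\; L\1_\D \odot X. \]
Next, the completion unit $\1_\D \to \Lambda\1_\D$ of the local duality context on $\D$ induces, on $L$-localization, a map $L\1_\D \to L\Lambda\1_\D$, which on $\odot$-ing with $X$ yields $L\1_\D \odot X \to L\Lambda\1_\D \odot X$. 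Combining \cref{tensorandtensor} with the fact that $L$ is smashing produces
\[ L\Lambda\1_\D \odot X \;=\; (L\1_\D \otimes \Lambda\1_\D) \odot X \;\simeq\; L\1_\D \otimes (\Lambda\1_\D \odot X) \;\simeq\; L(\Lambda\1_\D \odot X). \]
Finally, $L$ applied to the completion unit $\Lambda\1_\D \odot X \to \Lambda(\Lambda\1_\D \odot X) = \Lambda\1_\D \whodot X$ (the last identification by \cref{enrichedlocaltensor}\ref{l3:i}) gives the terminal map $L(\Lambda\1_\D \odot X) \to L(\Lambda\1_\D \whodot X)$. Each constituent of the composite is functorial in $X$, so together they define the required natural transformation $\alpha$.

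I do not anticipate a genuine obstacle at this step: every map involved is canonical, and the only care required is in bookkeeping the interplay between the enriched tensor $\odot$ (over $\Sp$) and the monoidal tensor $\otimes$ (in $\D$), using that $L$ is smashing while $\Lambda$ is not. The substantive content of \cref{main} will instead appear in the next step of \cref{strat}, where one must verify that this lax morphism is in fact \emph{strict} --- that is, that $\alpha$ is an equivalence.
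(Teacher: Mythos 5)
Your proposal is correct, and it constructs the natural transformation $\alpha$ by a more direct route than the paper does. Where you build $\alpha$ as an explicit composite of canonical maps — the equivalence $\rho$, the completion unit $\eta_\Lambda$ applied (twice) under $L$-localization, and the smashing identifications of \cref{tensorandtensor} and \cref{enrichedlocaltensor} — the paper instead passes to the right adjoints of the vertical equivalences in (\ref{firstsquare}), obtaining the square (\ref{secondsquare}) whose verticals are both $\Hom_\D^\Sp(\1_\D,-)$, constructs a transformation $\widetilde\alpha\colon L_{n-1}\Hom_\D^\Sp(\1_\D,-)\Rightarrow\Hom_\D^\Sp(\1_\D,L(-))$ by observing that $\Hom_\D^\Sp(\1_\D,-)\Rightarrow\Hom_\D^\Sp(\1_\D,L(-))$ factors through $L_{n-1}\Hom_\D^\Sp(\1_\D,-)$ because the target is $L_{n-1}$-local by \cref{localhoms}, and then recovers $\alpha$ from $\widetilde\alpha$ by an $\mathrm{ev}$/$\mathrm{coev}$ composite. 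Both constructions are legitimate and both yield a natural transformation with the correct signature, which is all the lemma asserts. The advantage of the paper's adjoint formulation is that it prepares the ground for the strictness verification that follows: there the paper reduces ``$\alpha$ is an equivalence'' to ``$\widetilde\alpha$ is an equivalence,'' and the factoring-through description of $\widetilde\alpha$ makes it straightforward to identify the fibre of the map and show it vanishes. Your direct construction of $\alpha$ buys a shorter and more transparent existence proof here, but if you carried your $\alpha$ forward to the strictness step you would either need to exhibit it as the adjunct of some $\widetilde\alpha$ (thereby recovering the paper's reduction), or run a different argument that $\alpha$ itself is an equivalence — this is worth flagging so the subsequent proofs do not silently assume the paper's particular description of $\alpha$.
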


\begin{proof}
    We need to show that there is a natural transformation $\alpha\colon L\1_\D \odot L_{n-1}(-) \Rightarrow L(\Lambda\1_\D \whodot -)$; that is, a natural map between the two paths around the square (\ref{firstsquare}). By taking the adjoints of the vertical equivalences in (\ref{firstsquare}) we obtain the second square
    \begin{equation}\label{secondsquare}
    \begin{gathered}\xymatrix{
    L_{K(n)} \Sp \ar[r]^{L_{n-1}} \ar@{<-}[d]_{\Hom_\D^\Sp(\1_\D,-)}^{\simeq} & L_{n-1} \Sp \ar@{<-}[d]_{\simeq}^{\Hom_\D^\Sp(\1_\D,-)} \\
    \Lambda \D \ar[r]_{L} & L \D \rlap{.}
}
\end{gathered}
\end{equation}
We note that the adjoints of the vertical equivalences in (\ref{firstsquare}) are given by $\Hom_\D^\Sp(\Lambda\1_\D, -)$ and $\Hom_\D^\Sp(L\1_\D,-)$ respectively by \cref{localhoms}, and that by adjunction these are moreover equivalent to $\Hom_\D^\Sp(\1_\D,-)$ on their respective domains. 

The data of $\alpha$ is equivalent to the data of a natural transformation \[\widetilde{\alpha}\colon L_{n-1}\Hom_\D^\Sp(\1_\D,-) \Rightarrow \Hom_\D^\Sp(\1_\D,L(-))\] of functors $\Lambda\D \to L_{n-1}\Sp$ between the two paths around   (\ref{secondsquare}). More explicitly, given $\widetilde{\alpha}$ we obtain $\alpha$ as the composite
\begin{align*}
L\1_\D \odot L_{n-1}(-) \xRightarrow{\mathmakebox[65pt]
{L\1_{\D} \odot L_{n-1}\mathrm{coev}_{\Lambda}}} & L\1_\D \odot L_{n-1}\Hom_\D^\Sp(\1_\D, \Lambda\1_\D \whodot -) \\
\xRightarrow{\mathmakebox[65pt]{L\1_{\D} \odot \tilde{\alpha}}} & L\1_\D \odot \Hom_\D^\Sp(\1_\D, L(\Lambda\1_\D \whodot -)) \\
\xRightarrow{\mathmakebox[65pt]{\mathrm{ev}_{L}}} & L(\Lambda\1_\D \whodot -),
\end{align*}
where $\mathrm{coev}_{\Lambda}$ is the unit of the adjunction $(\Lambda\1_{\D} \whodot-,\Hom_{\D}^{\Sp}(\1_{\D},-))$, and $\mathrm{ev}_L$ is the counit of the corresponding adjunction for $L$.

To construct such an $\widetilde{\alpha}$ we note that the natural map $\Hom_\D^\Sp(\1_\D, -) \Rightarrow \Hom_\D^\Sp(\1_\D, L(-))$ factors over $L_{n-1}\Hom_\D^\Sp(\1_\D, -)$ as $\Hom_\D^\Sp(\1_\D, L(-))$ is $L_{n-1}$-local by \cref{localhoms}. Therefore we have a natural transformation $\alpha$ as required.
\end{proof}

As we now have a lax morphism of recollements, we can apply the discussion of \cref{remark:morofrec} to obtain a functor $F \colon L_n \Sp \to \D$. The following lemmas allows us to deduce essential facts about $F$.

\begin{lem}\label{lem:Funit}
We have $F(L_nS^0) \simeq \1_\D$.
\end{lem}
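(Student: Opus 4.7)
The plan is to evaluate the pullback formula (\ref{defnF}) at $X = L_n S^0$ and to recognise the resulting square as the Hasse square for $\1_\D$ provided by \cref{prop:localduality}\ref{localduality5}. Since $j_\C^* = L_{K(n)}$ and $i_\C^* = L_{n-1}$ preserve the monoidal unit, at $X = L_n S^0$ the three explicit corners of (\ref{defnF}) are $\Lambda\1_\D \whodot L_{K(n)}S^0$, $L\1_\D \odot L_{n-1}S^0$ and $L(\Lambda\1_\D \whodot L_{K(n)}S^0)$. Applying the equivalences $\xi$ and $\rho$ from (\ref{units}), together with the fact that $L$ is smashing, identifies these three corners with $\Lambda\1_\D$, $L\1_\D$ and $L\Lambda\1_\D$ respectively.

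The main technical step is then to verify that the two maps in the simplified square agree with those of the Hasse square for $\1_\D$. The bottom horizontal map in (\ref{defnF}) is the unit of $(i_\D^\ast,i_{\D\ast})$, that is, the $L$-localization unit, which after the identification $L\xi$ becomes $\Lambda\1_\D \to L\Lambda\1_\D$. The right vertical map is more subtle: it is the composite of $L\1_\D \odot L_{n-1}\eta$, where $\eta \colon S^0 \to L_{K(n)}S^0$ is the $L_{K(n)}$-completion unit, with the instance of $\alpha$ built in \cref{lax}. The hard part of the argument will be to chase this composite through the construction of $\alpha$ via $\widetilde{\alpha}$, the coevaluation of $(\Lambda\1_\D \whodot -,\Hom_\D^\Sp(\1_\D,-))$ and the evaluation of $(L\1_\D \odot -, \Hom_\D^\Sp(\1_\D,-))$, using that $F_L$ and $F_\Lambda$ are symmetric monoidal (hence unital) to see that, under $\rho$ and $L\xi$, this composite reduces to $L$ applied to the $\Lambda$-unit $\1_\D \to \Lambda\1_\D$. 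This is the only place where genuine work is required.

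Once both maps have been identified, the square defining $F(L_n S^0)$ coincides with the Hasse square for $\1_\D$, which is a pullback with apex $\1_\D$ by \cref{prop:localduality}\ref{localduality5}. By the universal property of the pullback we conclude $F(L_n S^0) \simeq \1_\D$.
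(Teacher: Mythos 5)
Your proposal takes essentially the same route as the paper: evaluate the pullback square (\ref{defnF}) at $X = L_n S^0$, identify its three explicit corners with $\Lambda\1_\D$, $L\1_\D$ and $L\Lambda\1_\D$ via $\xi$, $\rho$, $L\xi$, recognise the target as the Hasse square for $\1_\D$ from \cref{prop:localduality}\ref{localduality5}, and reduce everything to the commutativity of one square comparing the composite $L\xi \circ \alpha \circ (L\1_\D \odot L_{n-1}\eta_\Lambda)$ with $L\eta_\Lambda \circ \rho$. You correctly flag this commutativity as the only place where real work is required; the paper carries out this diagram chase in the large diagram of \cref{bigdiagram}, which you describe but do not execute.
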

\begin{proof}
Consider the diagram
\[\xymatrix{
F(L_nS^0) \ar[rr] \ar[dd] & & L\1_\D \odot L_{n-1}S^0 \ar[dr]_{\simeq}^{\rho} \ar[dd]|\hole_<<<<<<{\alpha \circ (L\1_\D \odot L_{n-1}\eta_\Lambda)} & \\
& \1_\D \ar[rr] \ar[dd] & & L\1_\D \ar[dd]^{L\eta_\Lambda} \\
\Lambda\1_\D \whodot L_{K(n)}S^0 \ar[dr]_{\simeq}^{\xi} \ar[rr]_<<<<<<<{\eta_L}|-\hole & & L(\Lambda\1_\D \whodot L_{K(n)}S^0) \ar[dr]_{\simeq}^{L\xi} & \\
& \Lambda\1_\D \ar[rr]_{\eta_L} & & L\Lambda\1_\D 
}\]
Both the back face and the front face of this diagram are pullbacks by (\ref{defnF}) and \cref{prop:localduality}\ref{localduality5} respectively. The bottom square of the diagram commutes by naturality of $\eta_L$, so if the right hand face commutes, then we obtain an induced map $F(L_nS^0) \to \1_\D$ which is an equivalence as required. We verify that the right hand face commutes in \cref{bigdiagram}, which completes the proof.
\end{proof}

\begin{lem}\label{lem:itsmonoidal}
    The functor $F$ is naturally equivalent to the functor $\1_{\D} \odot -\colon L_n \Sp \to \D$. Therefore it is a coproduct-preserving and compact-preserving symmetric monoidal functor.
\end{lem}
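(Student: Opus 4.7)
The plan is to identify $F$ with $\1_\D \odot -$ via the universal property of spectra (\cref{tensors}\ref{tensor:3}), and then read off the additional structural properties from this explicit description. The central step is to show that $F$ preserves all colimits.

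To establish colimit-preservation of $F$, observe that by (\ref{defnF}), $FX$ is constructed pointwise as a pullback whose other three entries are $L\1_\D \odot L_{n-1} X$, $\Lambda\1_\D \whodot L_{K(n)} X$, and $L(\Lambda\1_\D \whodot L_{K(n)} X)$. Each of these is a composite of localization functors together with an enriched tensor over $\Sp$, and both sorts of functors are left adjoints; hence each of the three entries preserves colimits. Since $\D$ is stable, every pullback square is equivalent to a shifted cofibre sequence, so $F$ can be expressed as a finite colimit of colimit-preserving functors. As finite colimits commute with arbitrary colimits, $F$ preserves all colimits. Combining this with \cref{tensors}\ref{tensor:3} and \cref{lem:Funit} gives $F \simeq F(L_n S^0) \odot - \simeq \1_\D \odot -$.

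It remains to verify that $\1_\D \odot -\colon L_n\Sp \to \D$ is symmetric monoidal, coproduct-preserving and compact-preserving. Coproduct-preservation is already established. For the symmetric monoidality we mimic the proof of \cref{unitallyimpliesstrongly}: \cref{tensoringismonoidal} shows that $\1_\D \odot -\colon \Sp \to \D$ is symmetric monoidal, and descending to $L_n\Sp$ with the localized tensor amounts to showing that the natural map $\1_\D \odot (X \otimes Y) \to \1_\D \odot L_n(X \otimes Y)$ is an equivalence for all $X,Y \in L_n\Sp$. Via the $\odot$-$\Hom^\Sp$ adjunction this reduces to verifying that $\Hom_\D^\Sp(\1_\D, A)$ is $L_n$-local for every $A \in \D$. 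The main obstacle is that \cref{localhoms} presupposes an equivalence of the tensoring functor at the level at which one is testing locality, and such an equivalence at level $n$ is precisely what we are trying to prove. I would circumvent this by applying the fracture square of \cref{prop:localduality}\ref{localduality5} to decompose $A$ as the pullback of its $L$- and $\Lambda$-parts, and then apply \cref{localhoms} to the known symmetric monoidal equivalences $F_L$ and $F_\Lambda$ on the local and complete parts: this identifies the relevant mapping spectra as $L_{n-1}$-local and $L_{K(n)}$-local respectively, both of which are $L_n$-local, so the resulting pullback is also $L_n$-local. Finally, compact-preservation follows from the identification $F \simeq \1_\D \odot -$ together with a direct verification on a generating set of compact objects in $L_n\Sp$.
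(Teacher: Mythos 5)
Your core move -- identifying $F$ with $\1_\D \odot -$ via the universal property of spectra (\cref{tensors}) and \cref{lem:Funit}, then reading off the properties -- is the same as the paper's. Your explicit verification that $F$ preserves colimits (via the pullback description \eqref{defnF} and stability) is a reasonable way to fill in the hypothesis needed to invoke \cref{tensors}, which the paper treats implicitly. However, there are two places where your proposal goes off the optimal path.

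For symmetric monoidality, your argument is a substantial unnecessary detour, and it arises from an incorrect premise. You frame the issue as in \cref{unitallyimpliesstrongly}, where one must show $\1_\D \odot (X \otimes Y) \to \1_\D \odot L(X\otimes Y)$ is an equivalence, and then circumvent the circularity of \cref{localhoms} via the fracture square. But $L_n$ is a \emph{smashing} localization: for $X,Y \in L_n\Sp$ the spectrum $X \otimes Y$ is already $L_n$-local, so the monoidal product of $L_n\Sp$ is literally the restriction of that of $\Sp$, and the map you want to check is an equivalence is applied to an equivalence $X\otimes Y \xrightarrow{\sim} L_n(X\otimes Y)$. Hence \cref{tensoringismonoidal} applies directly with no localization to descend through; this is precisely the paper's observation. (Your fracture-square workaround does appear to go through once one chases the adjunctions identifying $\Hom^{\Sp}_\D(\1_\D,-)$ with the mapping spectra of $L\D$ and $\Lambda\D$, and uses that $L_{n-1}$-local and $L_{K(n)}$-local spectra are $L_n$-local and that $L_n$-local spectra are closed under limits -- but it is solving a problem that does not exist.)

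For compact-preservation, ``a direct verification on a generating set of compact objects'' is not an argument as stated; you still need to say why $\1_\D \odot K$ is compact for compact $K$. The paper's reasoning is: $\1_\D$ is compact in $\D$ because the tt-equivalence $\Phi$ carries the compact unit $L_n S^0$ to $\1_\D$, and then $\1_\D \odot -$ preserves compacts because its right adjoint $\Hom^{\Sp}_\D(\1_\D,-)$ preserves coproducts when $\1_\D$ is compact. Without the observation about $\Phi$, there is a genuine gap here.
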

\begin{proof}
   By the universal property of spectra (c.f., \cref{tensors}) together with \cref{lem:Funit}, we have $F \simeq \1_{\D} \odot -$. 
    It is clear that this functor is coproduct-preserving. Note that $\1_\D$ is compact since we have a tt-equivalence $\Phi\colon hL_n\Sp \to h\D$ which sends the compact unit $L_nS^0$ of $hL_n\Sp$ to $\1_\D$. Therefore the functor $\1_\D \odot -$ preserves compacts, since its right adjoint $\Hom_\D^\Sp(\1_\D,-)$ preserves sums as $\1_\D$ is compact.  Finally, it is also monoidal. Indeed, we have shown that it preserves the unit, and \cref{tensoringismonoidal} shows it preserves the tensor product as $L_{n}$ is smashing and as such the tensor product of $L_{n}\Sp$ coincides with the tensor product in $\Sp$. 
\end{proof}

For $X \in L_n\Sp$, we write $M_n X$ for the fibre of the natural localization map $X \to L_{n-1} X$, and note that this is the torsion functor arising from the local duality context $(L_n\Sp, F(n)).$

\begin{lem}\label{colocals}
For any $X \in L_n\Sp$, we have $\1_\D \odot M_nX \simeq \Gamma(\1_\D \odot M_nX).$
\end{lem}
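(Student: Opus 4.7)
The plan is to reduce the statement to showing $L(\1_\D \odot M_n X) \simeq 0$, which is equivalent to the claimed equivalence: indeed, by the cofibre sequence $\Gamma Y \to Y \to L Y$ from \cref{prop:localduality}, this vanishing forces $\1_\D \odot M_n X$ to lie in $\Gamma \D$, so $\Gamma(\1_\D \odot M_n X) \simeq \1_\D \odot M_n X$. Since $L$ is smashing (\cref{prop:localduality}) and by \cref{tensorandtensor}, I first rewrite
\[
L(\1_\D \odot M_n X) \simeq L\1_\D \otimes (\1_\D \odot M_n X) \simeq L\1_\D \odot M_n X,
\]
so the task reduces to showing that the functor $L\1_\D \odot -\colon L_n\Sp \to L\D$ vanishes on $M_n X$.

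The main step will be to identify $L\1_\D \odot -$ with the composite $F_L \circ L_{n-1}\colon L_n\Sp \to L\D$. Both are colimit-preserving and symmetric monoidal: $L\1_\D \odot -$ factors as the symmetric monoidal functor $\1_\D \odot -$ (by \cref{tensoringismonoidal} and \cref{lem:itsmonoidal}) followed by the symmetric monoidal smashing localization $L$; while $F_L$ is a symmetric monoidal equivalence with $F_L \simeq L\1_\D \odot -$ on $L_{n-1}\Sp$, and $L_{n-1}$ is a symmetric monoidal localization. Both functors send the unit $L_n S^0$ to $L\1_\D$, since $F_L(L_{n-1} S^0) \simeq L\1_\D$ by the unitality of $F_L$. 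By the universal property of $\Sp$ as the initial presentable stable symmetric monoidal $\infty$-category, manifested in \cref{tensors}, any two such colimit-preserving symmetric monoidal functors with the same value on the unit must be naturally equivalent.

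Applying this identification at $M_n X$ then yields
\[
L\1_\D \odot M_n X \simeq F_L(L_{n-1} M_n X) \simeq F_L(0) \simeq 0,
\]
using that $L_{n-1} M_n X \simeq 0$ by the defining fibre sequence $M_n X \to X \to L_{n-1} X$. This gives $L(\1_\D \odot M_n X) \simeq 0$ and hence the desired equivalence. I expect the main obstacle to be carefully justifying the universal-property step, namely verifying that both candidate functors genuinely are colimit-preserving and symmetric monoidal into $L\D$, and that the universal property of $\Sp$ (applied after passage through the localization $\Sp \to L_n\Sp$) really does pin them down up to equivalence; once that is in place, the remainder is a short formal computation.
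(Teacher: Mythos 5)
Your proposal is correct, and it reduces to the same essential identity as the paper's proof --- namely $L\1_\D \odot L_{n-1}X \simeq L\1_\D \odot X$ for $X \in L_n\Sp$ --- but it derives that identity by a genuinely different route. The paper's proof establishes the identity objectwise by applying $L\1_\D \odot -$ to the fibre sequence $M_n X \to X \to L_{n-1}X$ and then computing
\[
L\1_\D \odot L_{n-1}X \simeq L\1_\D \odot (L_{n-1}S^0 \otimes X) \simeq (L\1_\D \odot L_{n-1}S^0) \odot X \simeq L\1_\D \odot X,
\]
using only that $L_{n-1}$ is smashing, the interchange (\ref{tensoroftensor}), and the unitality equivalence $\rho$ from (\ref{units}). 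That argument is entirely self-contained: it does not use \cref{lem:Funit} or \cref{lem:itsmonoidal}.

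Your route instead identifies the whole functor $L\1_\D \odot -\colon L_n\Sp \to L\D$ with $F_L \circ L_{n-1}$ via the universal property of spectra. To do this rigorously one only needs \cref{tensors}\ref{tensor:3} (not a monoidal universal property --- \cref{tensors} does not speak of monoidality, so the invocation of the ``initial presentable stable symmetric monoidal $\infty$-category'' is a little beside the point here). The content is entirely in checking that both colimit-preserving functors take $L_n S^0$ to $L\1_\D$. For $F_L \circ L_{n-1}$ this is (\ref{units}). For $L\1_\D \odot -$ you get it either from the symmetric monoidality of \cref{lem:itsmonoidal}, or more directly from \cref{lem:Funit} together with $L\1_\D \odot L_nS^0 \simeq L(\1_\D \odot L_nS^0)$; either way, this step leans on \cref{lem:Funit}, which is the heaviest ingredient in the section (the \cref{bigdiagram} diagram chase). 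So your argument is valid and involves no circularity (\cref{lem:Funit} and \cref{lem:itsmonoidal} precede \cref{colocals}), but it is less economical: the paper gets by on just the unitality equivalence $\rho$, while you need the unit computation for the induced functor out of $L_n\Sp$. If you wanted to make your route self-contained in the same spirit, you could replace the appeal to \cref{lem:itsmonoidal} by the three-line objectwise computation of $L\1_\D \odot L_{n-1}X$ above --- but that is exactly the paper's proof.
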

    \begin{proof}
        We have a cofibre sequence
    $L \1_\D \odot M_n X \to L\1_\D \odot X \to L \1_\D \odot L_{n-1} X.$
        We then identify \[L\1_\D \odot L_{n-1} X \simeq L \1_\D \odot (L_{n-1}S^0 \otimes X) \simeq (L \1_\D \odot L_{n-1} S^0) \odot X \simeq L \1_\D \odot X\] by using that $L_{n-1}$ is smashing, (\ref{tensoroftensor}) and (\ref{units}) in turn. 
        As such, we have $L \1_\D \odot M_n X \simeq 0$ and therefore by considering the cofibre sequence 
        \[\Gamma\1_\D \odot M_nX \to \1_\D \odot M_nX \to L\1_\D \odot M_nX\] we see that $\Gamma\1_\D \odot M_nX \simeq \1_\D \odot M_nX$.
        The claim then follows from \cref{tensorandtensor} as $\Gamma$ is smashing.
    \end{proof}


\begin{lem}\label{equalthicks}
    There is an equality $\mrm{Thick}(\Phi (F(n))) = \mrm{Thick}(\1_\D \odot F(n))$.
\end{lem}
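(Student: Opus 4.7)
The plan is to transport the claimed equality back through the tt-equivalence $\Phi$ and then invoke the Hopkins--Smith thick subcategory theorem in $L_n\Sp^\omega$. Set $Y \coloneqq \Phi^{-1}(\1_{\D} \odot F(n)) \in hL_n\Sp$. Since $\Phi$ is an equivalence of triangulated categories, the claimed equality reduces to $\mathrm{Thick}(Y) = \mathrm{Thick}(F(n))$ in $hL_n\Sp$. By the thick subcategory theorem, it will suffice to verify that $Y$ is a compact, nonzero, $L_{n-1}$-acyclic object of $L_n\Sp$, since this forces $Y$ to be of type $n$ and hence to generate the same thick subcategory as $F(n)$.

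Compactness of $Y$ is immediate from \cref{lem:itsmonoidal}, which identifies $F \simeq \1_{\D} \odot -$ and shows it preserves compacts, combined with the fact that $\Phi^{-1}$ is a triangulated equivalence. The vanishing $L_{n-1}Y = 0$ follows from \cref{colocals}: since $F(n)$ is of type $n$ we have $M_n F(n) = F(n)$, so $\1_{\D} \odot F(n) \in \Gamma\D$ and therefore $L(\1_{\D} \odot F(n)) = 0$; \cref{restricts} then yields $L_{n-1}Y = \Phi^{-1}(L(\1_{\D} \odot F(n))) = 0$.

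The hard part will be showing $Y \neq 0$, equivalently $\1_{\D} \odot F(n) \neq 0$. For this I will unpack the pullback description (\ref{defnF}) of $F = \1_{\D} \odot -$ arising from the lax morphism of recollements of \cref{lax}. Since $L_{n-1}F(n) = 0$, the $L$-local leg $F_L(L_{n-1}F(n))$ of the defining pullback vanishes, so $F(F(n))$ is identified with the fibre of the map $F_{\Lambda}(L_{K(n)}F(n)) \to L F_{\Lambda}(L_{K(n)}F(n))$, which by \cref{prop:localduality} is $\Gamma(F_{\Lambda}(L_{K(n)}F(n)))$. As $F(n)$ is of type $n$ we have $L_{K(n)}F(n) \neq 0$, and both $F_{\Lambda}$ and the equivalence $\Gamma \iota_{\Lambda} \colon \Lambda\D \xrightarrow{\simeq} \Gamma\D$ of \cref{prop:localduality}\ref{localduality3} are equivalences, so $F(F(n)) \neq 0$ as required.

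Combining these, $Y$ is a nonzero type-$n$ compact object of $L_n\Sp$, whence $\mathrm{Thick}(Y) = \mathrm{Thick}(F(n))$ by the thick subcategory theorem, and applying $\Phi$ yields the claimed equality.
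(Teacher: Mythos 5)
Your proof is correct, but it takes a genuinely different route from the paper's. The paper works on the $\D$ side: it observes that $\Gamma\D \simeq L_{K(n)}\Sp$ (via $F_\Lambda$ and \cref{prop:localduality}\ref{localduality3}) and then invokes the $K(n)$-local thick subcategory theorem (\cite[Prop.\,12.1]{HS}), which says any two nonzero compact objects of $L_{K(n)}\Sp$ generate the same thick subcategory; it then checks that both $\Phi(F(n))$ and $\1_\D \odot F(n)$ lie in $(\Gamma\D)^\omega$, using \cref{restricts} and \cref{colocals} respectively. You instead pull the problem back to $L_n\Sp$ along $\Phi^{-1}$ and invoke the Hovey--Strickland classification of thick subcategories of $(L_n\Sp)^\omega$: showing $Y = \Phi^{-1}(\1_\D \odot F(n))$ is a nonzero compact $L_{n-1}$-acyclic object, hence of type $n$, forces $\mathrm{Thick}(Y) = \mathrm{Thick}(F(n))$. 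The two strategies use the same ingredients (\cref{lem:itsmonoidal}, \cref{colocals}, \cref{restricts}, \cref{prop:localduality}\ref{localduality3}) but deploy different versions of the thick subcategory theorem, and are of comparable difficulty. One point in favour of your write-up: the paper's final sentence implicitly uses that $\1_\D \odot F(n) \neq 0$ (otherwise it would generate only the zero thick subcategory), and this is not spelled out there. Your explicit argument for nonzeroness — unwinding the pullback description (\ref{defnF}) of $F$ to identify $F(F(n))$ with $\Gamma F_\Lambda(L_{K(n)}F(n))$, and using that $F_\Lambda$ and $\Gamma|_{\Lambda\D}$ are equivalences together with $L_{K(n)}F(n) \neq 0$ — is a sound way to close that gap, although it does lean on the identification $F \simeq \1_\D \odot -$ from \cref{lem:itsmonoidal} to connect the pullback formula to the object $\1_\D \odot F(n)$ you actually care about. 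That identification is available at this point in the argument, so everything checks out.
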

\begin{proof}
    By the $K(n)$-local thick subcategory theorem~\cite[Proposition 12.1]{HS}, $\mrm{Thick}(X) = \mrm{Thick}(Y)$ for all non-zero $X,Y \in (L_{K(n)}\Sp)^\omega$ where $(-)^\omega$ denotes the full subcategory of compact objects. 
    Since we have an equivalence of categories $\Gamma\D \simeq L_{K(n)}\Sp$ by composing the equivalence of \cref{prop:localduality}\ref{localduality3} with the equivalence $F_\Lambda$, it follows that $\mrm{Thick}(A) = \mrm{Thick}(B)$ for all non-zero $A,B \in (\Gamma\D)^\omega$. 
    As such, it suffices to show that $\Phi (F(n))$ and $\1_\D \odot F(n)$ are in $(\Gamma\D)^\omega$.  

    We observe that $\Gamma \D \cap \D^\omega \subseteq (\Gamma \D)^\omega$. As $F(n)$ is compact in $L_n \Sp$, both $\Phi (F(n))$ and $\1_\D \odot F(n)$ are compact in $\D$ (using \cref{lem:itsmonoidal} for the latter). We have $\Phi (F(n)) \in \Gamma\D$ by \cref{restricts}, and $\1_\D \odot F(n) \in \Gamma\D$ by \cref{colocals}, hence $\Phi(F(n))$ and $\1_D \odot F(n)$ are both in $(\Gamma\D)^\omega$ and therefore generate the same thick subcategory.
\end{proof}

\begin{lem}\label{lem:iffnthengood}
There is a natural equivalence $\1_\D \odot L_{n-1}(-) \simeq L(\1_\D \odot -)$.
\end{lem}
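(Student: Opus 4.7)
The plan is to apply the enriched tensor $\1_\D \odot -$ to the canonical cofibre sequence $M_n X \to X \to L_{n-1} X$ in $L_n\Sp$, producing a cofibre sequence
\[ \1_\D \odot M_n X \longrightarrow \1_\D \odot X \longrightarrow \1_\D \odot L_{n-1} X \]
in $\D$, and then to identify this with the localization cofibre sequence $\Gamma(\1_\D \odot X) \to \1_\D \odot X \to L(\1_\D \odot X)$ coming from the local duality context $(\D, \Phi(F(n)))$. By uniqueness of the localization triangle, this reduces the problem to verifying the two membership statements $\1_\D \odot M_n X \in \Gamma\D$ and $\1_\D \odot L_{n-1} X \in L\D$.

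The first of these is exactly \cref{colocals}, so the real work lies in establishing the second. Since $\Gamma$ is a smashing colocalization on $\D$, the object $\1_\D \odot L_{n-1} X$ is $L$-local if and only if $\Gamma\1_\D \otimes (\1_\D \odot L_{n-1} X) \simeq 0$, which by \cref{tensorandtensor} is equivalent to $\Gamma\1_\D \odot L_{n-1} X \simeq 0$. The natural way to attack this is to upgrade it to the stronger claim that $A \odot Y \simeq 0$ for every $A \in \Gamma\D$ and every $Y \in L_{n-1}\Sp$.

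For a fixed such $Y$, the full subcategory of those $A \in \D$ with $A \odot Y \simeq 0$ is a localizing tensor-ideal: closure under colimits is immediate since $- \odot Y$ is colimit-preserving, while the tensor-ideal property follows from \cref{tensorandtensor} together with symmetry of $\otimes$. By \cref{equalthicks} one has $\Gamma\D = \mrm{Loc}^{\otimes}(\1_\D \odot F(n))$, so it suffices to verify that $(\1_\D \odot F(n)) \odot Y \simeq 0$; using (\ref{tensoroftensor}) this reduces to $\1_\D \odot (F(n) \otimes Y) \simeq 0$, which in turn follows from the vanishing $F(n) \otimes Y \simeq 0$. This last vanishing is a consequence of $L_{n-1}$ being smashing on $L_n\Sp$ together with $L_{n-1}F(n) \simeq 0$ (as $F(n)$ has type $n$), since then $F(n) \otimes Y \simeq F(n) \otimes L_{n-1}S^0 \otimes Y \simeq L_{n-1}F(n) \otimes Y \simeq 0$.

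The main obstacle is the step invoking \cref{equalthicks} to identify $\Gamma\D$ as the localizing tensor-ideal generated by $\1_\D \odot F(n)$ rather than by the tautological generator $\Phi(F(n))$: it is this translation to the transported generator that lets the chromatic vanishing on the source side pass across the enriched tensor $\odot$ to yield $L$-locality on the target side.
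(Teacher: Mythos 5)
Your proof is correct, but it takes a genuinely different route from the paper. The paper's proof is brief: having shown in \cref{lem:itsmonoidal} that $\1_\D \odot -$ is a coproduct-preserving, compact-preserving symmetric monoidal functor, it invokes \cite[Proposition 2.7]{Stevenson} as a black box to get $\1_\D \odot L_{n-1}(-) \simeq L'(\1_\D \odot -)$ for the localization $L'$ of the local duality context $(\D, \1_\D \odot F(n))$, and then uses \cref{equalthicks} to conclude $L' = L$. You instead re-derive the relevant special case of Stevenson's result by hand: you apply $\1_\D \odot -$ to the cofibre sequence $M_nX \to X \to L_{n-1}X$ and verify that the two ends land in $\Gamma\D$ (via \cref{colocals}) and $L\D$ (via a localizing tensor-ideal argument that bottoms out in the chromatic vanishing $F(n) \otimes Y \simeq 0$ for $Y \in L_{n-1}\Sp$), so that the resulting triangle must be the localization triangle of $\1_\D \odot X$. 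Both arguments pivot on \cref{equalthicks} in the same essential way --- it is what permits the passage between the tautological generator $\Phi(F(n))$ and the computable generator $\1_\D \odot F(n)$ --- and you correctly identify this as the crux. The payoff of your route is self-containment and transparency: the chromatic input (type $n$ finites become $L_{n-1}$-acyclic) is made explicit rather than hidden inside a cited proposition. The paper's route buys brevity and hooks into an existing general framework; yours is a reasonable alternative if one wishes to avoid the external reference.
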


\begin{proof}
    By \cref{lem:itsmonoidal} we can invoke \cite[Proposition 2.7]{Stevenson}, which tells us that the localization $L'$ associated to the local duality context $(\D, \1_\D \odot F(n))$ satisfies $\1_\D \odot L_{n-1}(-) \simeq L'(\1_\D \odot -)$. By \cref{equalthicks}, the local duality contexts $(\D, \Phi (F(n)))$ and $(\D, \1_\D \odot F(n))$ produce the same (co)localization functors, that is, $L=L'$.
\end{proof}

\begin{prop}
   The square (\ref{firstsquare}) commutes, i.e., the natural map \[\alpha\colon L\1_\D \odot L_{n-1}(-) \Rightarrow L(\Lambda\1_\D \whodot -)\]
   of \cref{lax} is an equivalence. In particular, the lax map of recollements is in fact a strict map.
\end{prop}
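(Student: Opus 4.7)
The plan is to reduce the claim to showing that the natural transformation
\[
\widetilde{\alpha}\colon L_{n-1}\Hom_\D^\Sp(\1_\D, -) \Rightarrow \Hom_\D^\Sp(\1_\D, L(-))
\]
of functors $\Lambda\D \to L_{n-1}\Sp$ constructed in the proof of \cref{lax} is an equivalence; as noted there, the data of $\alpha$ and of $\widetilde{\alpha}$ are equivalent.

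Fix $Y \in \Lambda\D$ and apply the functor $L_{n-1}\Hom_\D^\Sp(\1_\D, -)$ to the cofibre sequence $\Gamma Y \to Y \to LY$ in $\D$ to obtain a cofibre sequence of spectra
\[
L_{n-1}\Hom_\D^\Sp(\1_\D, \Gamma Y) \to L_{n-1}\Hom_\D^\Sp(\1_\D, Y) \to L_{n-1}\Hom_\D^\Sp(\1_\D, LY).
\]
Since $LY \in L\D$, \cref{localhoms} applied to the equivalence $F_L$ shows that $\Hom_\D^\Sp(\1_\D, LY)$ is already $L_{n-1}$-local, so the third term is canonically $\Hom_\D^\Sp(\1_\D, LY)$. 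Under this identification, the second map is exactly $\widetilde{\alpha}_Y$: both are the unique factorisation, through the $L_{n-1}$-localisation, of the natural map $\Hom_\D^\Sp(\1_\D, Y) \to \Hom_\D^\Sp(\1_\D, LY)$ induced by $Y \to LY$. Hence it suffices to prove that $L_{n-1}\Hom_\D^\Sp(\1_\D, Z) \simeq 0$ for every $Z \in \Gamma\D$.

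Consider the functor $G := L_{n-1}\Hom_\D^\Sp(\1_\D, -)\colon \D \to \Sp$. Since $\1_\D \simeq \Phi(L_n S^0)$ is compact in $\D$ (compactness of $L_n S^0$ is preserved by the tt-equivalence $\Phi$), the functor $\Hom_\D^\Sp(\1_\D, -)$ preserves colimits, and $L_{n-1}$ is a left adjoint, so $G$ preserves colimits. Moreover, the colimit-preserving functor $W \mapsto \Hom_\D^\Sp(\1_\D, \1_\D \odot W)\colon \Sp \to \Sp$ takes value $\mathrm{End}(\1_\D)$ at $S^0$, hence by \cref{tensors}\ref{tensor:2} is equivalent to $\mathrm{End}(\1_\D) \otimes -$. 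Using that $L_{n-1}$ is smashing and that $F(n)$ is $E(n-1)$-acyclic,
\[
G(\1_\D \odot F(n)) \simeq L_{n-1}\bigl(\mathrm{End}(\1_\D) \otimes F(n)\bigr) \simeq \mathrm{End}(\1_\D) \otimes L_{n-1}F(n) \simeq 0.
\]

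By \cref{equalthicks} the objects $\1_\D \odot F(n)$ and $\Phi(F(n))$ generate the same thick subcategory of compact objects in $\D$; since the localizing subcategory generated by a set of compact objects is determined by the thick subcategory they generate, they also generate the same localizing subcategory, and because $\D$ is compactly generated by $\1_\D$ this localizing subcategory coincides with the localizing tensor-ideal $\Gamma\D$. A colimit-preserving functor vanishing on a generator of a localizing subcategory vanishes on the whole subcategory, so $G \equiv 0$ on $\Gamma\D$, giving $L_{n-1}\Hom_\D^\Sp(\1_\D, \Gamma Y) \simeq 0$ as required. The main obstacle is that $\Lambda$ is not a smashing localisation, so the commutativity of $\1_\D \odot -$ with $L_{n-1}$ supplied by \cref{lem:iffnthengood} does not transport directly to the complete side; the resolution is to dualise via the adjunction $\1_\D \odot - \dashv \Hom_\D^\Sp(\1_\D, -)$ and reduce to a vanishing statement on the torsion part $\Gamma\D$, where the smashness of $L_{n-1}$ together with the single-compact-generator description of $\Gamma\D$ completes the argument.
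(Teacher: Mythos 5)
Your proof is correct, and while the overall skeleton matches the paper's (reduce to $\widetilde{\alpha}$ being an equivalence, then reduce to showing $L_{n-1}\Hom_\D^\Sp(\1_\D,-)$ vanishes on the torsion part), the way you establish that vanishing is genuinely different and somewhat more streamlined. The paper argues pointwise: it rewrites $L_{n-1}\Hom_\D^\Sp(\1_\D,\Gamma A)$ as $\Hom_\D^\Sp(\1_\D,\Gamma A)\otimes L_{n-1}S^0$ (using that $L_{n-1}$ is smashing), constructs a natural map $\theta_Y\colon\Hom_\D^\Sp(\1_\D,\Gamma A)\otimes Y\to\Hom_\D^\Sp(\1_\D,\Gamma A\odot Y)$, shows $\theta_Y$ is an equivalence for all $Y$ by a localizing-subcategory argument, and then specializes to $Y=L_{n-1}S^0$ and invokes \cref{lem:iffnthengood} (which in turn relies on \cite[Proposition 2.7]{Stevenson} plus \cref{equalthicks}) to conclude $\Gamma A\odot L_{n-1}S^0\simeq L\Gamma A\simeq 0$. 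You instead argue functorially: you observe that $G=L_{n-1}\Hom_\D^\Sp(\1_\D,-)$ is colimit-preserving (using compactness of $\1_\D$ and smashness of $L_{n-1}$), compute $G(\1_\D\odot F(n))\simeq\mathrm{End}(\1_\D)\otimes L_{n-1}F(n)\simeq 0$ directly via the universal property of spectra and the $E(n-1)$-acyclicity of $F(n)$, and then use \cref{equalthicks} together with the fact that in a category compactly generated by its unit every localizing subcategory is a tensor-ideal to identify $\Loc(\1_\D\odot F(n))=\Gamma\D$. This bypasses \cref{lem:iffnthengood} and Stevenson's result entirely, using \cref{equalthicks} directly instead; your route is thus a bit more self-contained. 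One minor quibble: your closing remark invokes \cref{lem:iffnthengood} as if you relied on it, which is misleading since your argument does not use that lemma — you should either drop that sentence or rephrase it to say you are avoiding it.
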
   

\begin{proof}
    By the definition of $\alpha$, one sees that $\alpha$ is an equivalence if and only if $\widetilde{\alpha}$ is an equivalence, as the verticals in (\ref{firstsquare}) are equivalences. So we check that $\widetilde{\alpha}$ is an equivalence. For $A \in \Lambda\D$, the map $\widetilde{\alpha}$ was defined to be the composite
    \[L_{n-1}\Hom_\D^\Sp(\1_\D, A) \xrightarrow{L_{n-1}\Hom_\D^\Sp(\1_\D, \eta_L)} L_{n-1}\Hom_\D^\Sp(\1_\D, LA) \xrightarrow[\simeq]{\varepsilon} \Hom_\D^\Sp(\1_\D, LA)\]
    using \cref{localhoms} for the latter map. In order to check that the first map is an equivalence, it suffices to check that $L_{n-1}\Hom_\D^\Sp(\1_\D, \Gamma A) \simeq 0.$ 

    We have $L_{n-1}\Hom_\D^\Sp(\1_\D,\Gamma A) \simeq \Hom_\D^\Sp(\1_\D,\Gamma A) \otimes L_{n-1}S^0$ as $L_{n-1}$ is smashing. 
    For any $Y \in \Sp$, there is a natural map \[\theta_{ Y}\colon\Hom_\D^\Sp(\1_\D,\Gamma A) \otimes Y \to \Hom_\D^\Sp(\1_\D, \Gamma A \odot Y)\] adjoint to the evaluation map \[\1_\D \odot (\Hom_\D^\Sp(\1_\D,\Gamma A) \otimes Y) \simeq (\1_\D \odot \Hom_\D^\Sp(\1_\D,\Gamma A)) \odot Y \to \Gamma A \odot Y\] where the equivalence comes from (\ref{tensoroftensor}). 
    The set of spectra $Y$ for which $\theta_{Y}$ is an equivalence is a localizing subcategory and clearly contains $S^0$. Therefore $\theta_{Y}$ is an equivalence for all $Y$, in particular, for $Y = L_{n-1}S^0$. By combining this with \cref{lem:iffnthengood}, we have \[L_{n-1}\Hom_\D^\Sp(\1_\D, \Gamma A) \simeq \Hom_\D^\Sp(\1_\D, \Gamma A \odot L_{n-1}S^0) \simeq \Hom_\D^\Sp(\1_\D, L\Gamma A) \simeq 0\] as required. Therefore $\widetilde{\alpha}$ is an equivalence, and hence so is $\alpha$.
\end{proof}

We have now resolved all the steps of \cref{strat}, and as such, applying \cref{thm:equivrec} shows that the induced functor $F \simeq \1_{\D} \odot - \colon L_n\Sp \to \D$ is a symmetric monoidal equivalence. This completes the proof of \cref{main}, which states that $L_n \Sp$ is strongly tt-rigid if $L_{n-1}\Sp$ and $L_{K(n)}\Sp$ are unitally tt-rigid.



\newpage

\begin{landscape}
\pagestyle{empty}
\begin{figure}
    \centering
\[
\resizebox{\displaywidth}{!}{
\xymatrix@C=2em@R=8em{
L\1 \odot L_{n-1}S^0 \ar[dd]|{\rho} \ar[rrrrrr]|{\,L\1 \odot L_{n-1}\eta_{\Lambda} \,} &                                                   &                                            &                    &                                           &                    & L\1 \odot L_{n-1}L_{K(n)}S^0 \ar[d]|{L\1 \odot L_{n-1} \mathrm{coev}_{\Lambda}}            \\
                                  & L(\1 \odot S^0) \ar@{<-}[lu]|{\, \beta \,} \ar[ld]|{\, \simeq \,} \ar[d]|{L\eta_{\Lambda}} \ar[r] \ar@<-1ex>@{}[r]_-{L(\1 \odot \eta_{\Lambda})} & L(\1 \odot L_{K(n)}S^0) \ar[d]|{L \eta_{\Lambda}} \ar@{<-}[rrrru]|{\,\beta\,} \ar[rr]|{\,L(\1 \odot \mathrm{coev}_{\Lambda})\,} &                    & L(\1 \odot \Hom(\1,\Lambda \1 \whodot L_{K(n)}S^0)) \ar[rd]|{\,L(\1 \odot \Hom(\1,\eta_L))  \,} \ar[ld]|{\, L \eta_{\Lambda}\,} \ar[ddd]|{L \mathrm{ev}} &                    & L\1 \odot L_{n-1} \Hom(\1, \Lambda \1 \whodot L_{K(n)}S^0) \ar[d]|{L\1 \odot L_{n-1} \Hom(\1, \eta_L)} \ar[ll]|{\, \beta \,} \\
L\1 \ar[dd]|{L\eta_{\Lambda}}                & L \Lambda(\1 \odot S^0) \ar[ldd]|{\, \simeq \, } \ar[r] \ar@{}@<1ex>[r]^-{L\Lambda(\1 \odot \eta_{\Lambda})}                   & L \Lambda(\1 \odot L_{K(n)}S^0) \ar[d]|{L\gamma} \ar[r] \ar@<-1ex>@{}[r]_-{L\Lambda ( \1 {\odot} \mathrm{coev}_{\Lambda})}                 & L \Lambda (\1 \odot \Hom(\1, \Lambda \1 \whodot L_{K(n)}S^0)) \ar[d]|{L\gamma}  &                                           & L(\1 \odot \Hom(\1,L(\Lambda \1 \whodot L_{K(n)}S^0)))  \ar[d]|{L \mathrm{ev}}  \ar@{<-}[dr]|{\, \vartheta \,} & L\1 \odot L_{n-1} \Hom(\1,L( \Lambda \1 \whodot L_{K(n)}S^0)) \ar[d]|{L\1 \odot \varepsilon_L} \ar[l]|{\,\beta\,}  \\
                                  &                                                   &  L(\Lambda\1 \whodot L_{K(n)}S^0)                      \ar@{=}[rrd]|{\,\mathrm{id}\,} \ar[r] \ar@<1ex>@{}[r]^-{L (\Lambda \1 \widehat{\odot} \mathrm{coev}_{\Lambda})}  \ar[lld]|{L\xi}  & L(\Lambda\1 \whodot\Hom(\1, \Lambda \1 \whodot L_{K(n)}S^0)) \ar[rd]|{\, L \mathrm{ev}_{\Lambda} \, } &                                           & L(L(\Lambda \1 \whodot L_{K(n)} S^0)) \ar[ld]|{\, \varepsilon_{L} \,} & L\1 \odot\Hom(\1,L( \Lambda \1 \whodot L_{K(n)}S^0)) \ar[lld]|{\,\mathrm{ev}_L \,}     \ar[d]|{\, \mathrm{ev}_L \,}     \\
L \Lambda \1                           &                                                   &                                            &                    & L(\Lambda\1 \whodot L_{K(n)}S^0) \ar[llll]|{\, L \xi \,}    \ar@{=}[rr]|{\, \mathrm{id}\,}                  &                    & L(\Lambda\1 \whodot L_{K(n)}S^0)                     
}
}
\]
\captionsetup{singlelinecheck=off}
\caption[optional]{A diagram providing the commutativity of the required square in the proof of \cref{lem:Funit}. We note that the outside square of the above diagram is exactly the square appearing in \cref{lem:Funit} once the definition of $\alpha$ has been spelt out. Here:
\begin{itemize}
    \item $\gamma$ is the natural equivalence $\gamma \colon \Lambda(\1 \odot X) \xrightarrow{\sim} \Lambda \1 \whodot X$ for all $X \in \Sp$,
    \item $\vartheta$ is the natural equivalence $\vartheta \colon LA \odot X \xrightarrow{\sim} L(A \odot X) $ for all $A \in \D$ and $X \in \Sp$ from \cref{tensorandtensor},
    \item $\beta$ is the natural equivalence $\beta \colon LA \odot L_{n-1} Y \xrightarrow{\sim} L(A \odot Y)$ for all $A \in \D$ and $Y \in \Sp$ obtained by composing $\vartheta$ and (\ref{tensoroftensor}),
    \item $\rho$ and $\xi$ are the maps as in  (\ref{units}).
\end{itemize}
The result follows by checking that each inner diagram commutes, which follows by the naturality of the functors involved.
}\label{bigdiagram}
\end{figure}
\end{landscape}

\bibliographystyle{abbrv}
\bibliography{brwreferences}
\end{document}